\numberwithin{equation}{section}
\title{Cellulose Biodegradation Models\SP; An Example of Cooperative Interactions in Structured Populations}
\author{Pierre-Emmanuel Jabin\thanks{Department of Mathematics,
University of Maryland, College Park, pjabin@umd.edu} \;\; Alexey Miroshnikov\thanks {Department of Mathematics, University of
California, Los Angeles, amiroshn@gmail.com} \;\;  Robin Young \thanks {Department of Mathematics and Statistics, University of
Massachusetts Amherst, young@math.umass.edu}}
\def\del{\partial}
\newcommand{\RR}{\mathbb{R}}
\newcommand{\eps}{\varepsilon}
\def\clap#1{\hbox to 0pt{\hss#1\hss}}
\newcommand{\SP}{\hspace{1pt}}
\newtheorem{theorem}{Theorem}
\newtheorem{lemma}{Lemma}
\newtheorem{remark}{Remark}
\numberwithin{remark}{section}
\numberwithin{theorem}{section}
\numberwithin{lemma}{section}
\newcommand{\cmt}[1]{#1}  
\newcommand{\ssst}[1]{{\scriptscriptstyle\it #1}}
\date{}
\begin{document}

\maketitle

\begin{abstract}
We introduce various models for cellulose bio-degradation by micro-organisms. Those models rely on complex chemical mechanisms, involve the structure of the cellulose chains and are allowed to depend on the phenotypical traits of the population of micro-organisms. We then use the corresponding models in the context of multiple-trait populations. This leads to classical, logistic type, reproduction rates limiting the growth of large populations but also, and more surprisingly,  limiting the growth of populations which are too small in a manner similar to the effects seen in populations requiring cooperative interactions (or sexual reproduction).
This study thus offers a striking example of how some mechanisms resembling {\em cooperation} can occur in structured biological populations, even in the absence of any actual cooperation.
\end{abstract}

\tableofcontents

\section{Introduction}
The goal of this article is to derive models for structured populations of micro-organisms living off cellulose degradation. Our first step is to study the mechanisms by which some micro-organisms can use cellulose. The full process is obviously complex and we have to abstract its most important features. This gives us a hierarchy of models, depending on the level of simplification that one desires.

The second step is to couple those models with the population dynamics of the corresponding micro-organisms. While the mechanism of bio-degradation that we consider is similar for each species of micro-organisms, we allow for some variability from one species to another, in the enzymes involved for instance. This leads to a population structured by a phenomenological trait that describes the exact path of bio-degradation.

As the amount of cellulose is limited, the total growth of the population is, unsurprisingly, limited as well. More interesting are the effects when the total population or the population in a given species is small. The model does not include any actual cooperation between micro-organisms but as the bio-degradation occurs in several steps, the process is nonlinear in the population size even if cellulose is abundant. This puts small populations at a disadvantage, introducing an effect similar to classical cooperation.


\par\medskip

\noindent{\bf Cellulose Bio-degradation. Mechanisms and Models.}
Cellulose is the structural component of many plants and is therefore
the most abundantly produced bio-polymer; it is a homo-polymer
consisting of a vast number of glucose units. The most important feature
of cellulose as a substrate is its {\em insolubility}. As such,
bacterial and fungal degradation of cellulose, (e.g.\;by fungi {\it
  Trichoderma reesei} or bacteria {\it Clostridium thermocellum}),
occurs exocellularly. The products of cellulose hydrolysis are
available as carbon and energy sources for microbes that inhabit
environments in which cellulose is biodegraded \cite{LESH95, LYND02}.

\par

In this work we model cellulose bio-degradation as a multiple-step
process, reflecting realistic mechanisms described in \cite{LYND02}.
Let $\varrho(t)$ denote the mass of cellulose. 
The biodegrading microorganism is unable to consume (degrade) the cellulose $\varrho$ directly. Instead, the individuals produce two enzyme complexes
$e_{1}(t) $ and $ e_{2}(t)$ that act in a two-stage process.

During the first stage, the (endoglucanase) enzyme $e_{1}$ weakens
cellulose fibers in $\varrho$: that is, it randomly cuts the fibers,
creating the so-called reducing and non-reducing ends which serve as
landing sites for the (exoglucanase) enzyme $e_{2}$.
During the second stage, the enzyme $e_{2}$ locates a landing site and attaches itself to it.  Once attached, it
cleaves off cellobiose (a major energy source for the microorganisms)
from the chain of polysaccharides.  Some portion $\theta_p \in [0,1]$ of
cellobiose is consumed directly by the microorganism that produced the
enzymes, and the rest is available for other individual microorganisms
in the population due to diffusion. The above mechanisms can be viewed
as follows:
\begin{center}
Growth of micro-organisms $+$ influx of cellulose
$\varrho(t)$\\
$\downarrow$\\
Production of enzyme complexes $e_{1}(t)$ and  $e_{2}(t)$\\
$\downarrow$\\
Weakening of $\varrho(t)$ by $e_{1}(t)$\\
$\downarrow$\\
Production of cellobiose $p(t)$ by $e_{2}(t)$ acting on
$\varrho(t) $.
\end{center}
{The last two steps in the above diagram constitute the so-called cleaving mechanism. In our work we present two different cleaving mechanisms that differ in complexity (see Section \ref{sec:struct}). In Cleaving Mechanism~1 the enzyme $e_2$ that cuts off cellobiose units has two states, `attached to' and `detached from' cellulose. In Cleaving Mechanism 2, however, the enzyme $e_2$ is always detached. In this mechanism cleaving happens instantaneously once the enzyme $e_2$ finds a spot on the cellulose where it is able to cut off cellobiose.
}

\par

In the present work we develop several models of varying complexity
which incorporate these mechanisms.  Even in the simplest model the aforementioned cascade of events produces a {\it cooperative effect}, which appears due to the fact that the cellobiose units cleaved off by the enzyme of one microorganism are available for consumption by other individuals
located nearby.  Mathematically, these effects are encoded in the
reproduction rate $B(n)$ of the population $n$.  In particular, for
small populations, the population size $n(t)$ turns out to behave as
\begin{equation*}
\del_t n(\cdot , t)  \sim n \big( B(n) - d \big) \quad \mbox{with}
\quad B(n) \sim C n^2\, \quad \mbox{when} \quad
 n \ll \bar n,
\end{equation*}
where $\bar n$ is a critical threshold.

In general, the population includes various species of
micro-organisms. In that case, the exact enzyme complexes used may
change across species.  We represent the different species $x_j$ by traits
$j\in\{1,\dots, M\}$, where the population with trait $j$ uses
enzymes complexes denoted $e_{1,j}$ and $e_{2,j}$.  {This can be
  included in a more general framework by considering continuous
  traits $x$ with sub-populations $n(t,x)$ with a model of the form
\begin{equation}\label{ctrait}
  \del_t n(x,t)  = \Big( B[n](x,t) - d(x) \Big)n(x,t)\,,
\end{equation}
where $B[n]$ is now an integral operator; see \eqref{Bcont}
for the precise formula.
The case of discrete traits is included in this framework by
taking $n(t,x)=\sum_j n_j(t)\,\delta_{x_j}(x)$ with $n_j(t)$ the
population of individuals with trait $x_j$.

\smallskip

\cmt{ \noindent{\bf Hierarchy of Models.} Let us give a brief description of the models developed in our work and their relations. 
\[
 \overbrace{\text{($N$-$S$-model)} \to \text{($S$-model)} }^{\text{Mechanism 1}} \to \overbrace{\text{($T$-model)} \leftarrow \text{(multiple-trait $T$-model)}}^{\text{Mechanism 2}}.
\]
The most complex model that explicitly takes the kinetics of enzymatic
reactions into account is the $N$-$S$-model. We monitor the evolution
of the cellulose chains $N$, structured by polymer length and the
total number of landing sites where the enzyme $e_2$ can be
attached. This model incorporates cleaving Mechanism~1, in which the
enzyme $e_2$ is either `attached' ($e_{2A}$) or `detached'
($e_{2D}$). In addition, the model tracks the evolution of unoccupied
landing sites $S$ and the total number of landing sites $T$; these two
variables are related via $T = S + {e_{2A}}$. Under the assumption
that the cleaving rates are independent of the polymer structure, the
$N$-$S$-model reduces to the $S$-model. The next reduction occurs when
cleaving Mechanism 1 is replaced with Mechanism 2 in which the two stages of cleaving are combined into one. Here, the enzyme $e_2$ is always detached (the attachment and cleaving of cellobiose occur instantaneously) while the number of unoccupied landing sites and total number of sites coincide $T=S$, leading to the $T$-model. Though this is the simplest model it still captures the basic features of cellulose biodegradation. For this reason we extend it to a multiple-trait $T$-model which allows for species structured by a parameter. Finally to study more specifically cooperative effects we modify the above models to take only time scales on which the population changes into account (see Section \ref{sec:coop}).
}

This framework has some interest for the analysis even when it is only
applied for a finite number of traits as is the case here. Indeed our
traits correspond to possible enzyme complexes and while some of them
may still be unknown, we do not expect their number to be very
large. We also refer to \cite{Gyllenberg} for the reason that in general one can only expect a finite (if possibly very large) number of traits at the ecological equilibrium, Evolutionarily Stable Strategy or ESS.}

Our model therefore resembles systems of population dynamics, see
\cite{Perthame07} for instance.  However, in contrast to many of those
systems, the cellulose bio-degradation process leads to both
competition between individuals and species (for the resource) and
cooperative interactions.  This occurs at the interspecies
level as the byproduct of the process, cellobiose, is the same
independent of the enzyme complexes involved, and can benefit any
individual in the population (and not only individuals using the same
complexes).
Cooperation also occurs specifically within each species (or between
species that are close enough).  This follows from the fact that an
individual with similar enough enzyme complexes can use {a landing site}  created in the cellulose by the endoglucanase enzyme complex of another individual. The mathematical models developed in our work thus
lead to different (and hopefully improved) phenomenological results for small populations (in deterministic models) since cooperation significantly affects the dynamics, as discussed in more detail below. Those differences of behaviour for small populations may not impact the final ESS, but they are important in the transitory regime, in particular in the presence of mutations.

\cmt{From the ecological point of view, an important conclusion of our modeling is that as soon as minimally complex biochemical processes are involved, one cannot simply interpret the relation between the individuals in the population (or in different sub-populations) as competitive or cooperative. When the population is very large, the interaction looks competitive because resources are limited. On the other hand when the population is very small, the interaction seems to be cooperative as the limitation on growth mostly comes from the ability of the individuals to process efficiently the several steps of the biochemical process. But this is only a caricature of the actual interaction which cannot be reduced to pure cooperation or competition.}

\smallskip

 \cmt{\noindent{\bf Tail issue in deterministic selection dynamics.} The cooperative nature of the interaction when populations are small also becomes important for deterministic selection models. As sub-populations may grow or decrease exponentially, there typically are several orders of magnitude between the large populations of dominant traits and smaller ones. This poses an acute problem for modeling as one would need to use deterministic model for the larger populations. But such deterministic equations do not adequately capture the stochastic nature of the dynamics of smaller populations.
This problem can however be alleviated if such small populations go extinct since accurately modeling their behavior loses its relevance. This is precisely what happens if cooperation is needed when the population goes below a certain threshold: The birth rate of a small population is then necessarily too small ensuring a negative reproduction rate and extinction. We further discuss this phenomenon in the appendix.}

\par

\smallskip

\cmt{
\noindent{\bf Structure of the paper.} In Section \ref{sec:struct}, we
set notation and, following the seminal article \cite{LYND02},
introduce basic processes and mechanisms that constitute cellulose
biodegradation. In Section \ref{sec:models}, we develop a series of
mathematical models, that differ in complexity, for the coevolution
between microorganisms which consume cellulose and the cellulose
chains. In Section \ref{sec:coop}, we carry out a qualitative analysis
of the models developed in Section \ref{sec:models} under the
assumption that cellulose dynamics and enzymatic reactions occur on a
faster time scale than the dynamics of the microbial population. In
Section \ref{sec:conttrait}, the multiple-trait model, developed
earlier in Section \ref{sec:models}, is extended to a continuous-trait
model.  We carry out some numerical experiments in Section
\ref{sec:num_exp}, in which we compare the models and demonstrate that
the $T$-model can be regarded as a limit of the $S$-model.  Finally,
we discuss the tail issue in more detail in the appendix.
}

\section{Cellulose bio-degradation: structure and mechanisms}
\label{sec:struct}

\subsection{Cellulose structure and enzyme systems}

Cellulose is the most abundantly produced bio-polymer.  It is a
homo-polymer consisting of glucose units joined by $\beta$-$1,4$
bonds.  In secondary walls of plants, the size of cellulose molecules
(degree of polymerization) varies from seven thousand to fourteen
thousand glucose moieties per molecule.  Cellulose molecules are
strongly associated through inter- and intra-molecular
hydrogen-binding and van der Waals forces that result in the formation
of microfibrils, which in turn form fibrils.  Cellulose molecules are
oriented in parallel, with reducing ends of adjacent glucan chains
located at the same end of a micro-fibril.  These molecules form
highly ordered crystalline domains interspersed with more disordered,
amorphous regions.  Although cellulose forms a distinct crystalline
structure, cellulose fibers in nature are not purely crystalline.  The
degree of crystallinity varies from purely crystalline to purely
amorphous.

\par

To degrade plant cell material, microorganisms produce multiple
enzymes known as enzyme systems \cite{LYND02}.  For microorganisms to
hydrolyze and metabolize insoluble cellulose, extra-cellular cellulases
(degradation enzymes) must be produced that are either {\it free} or
{\it cell associated}.  Microorganisms have adapted different
approaches to effectively hydrolyze cellulose, naturally occurring in
insoluble particles.  Cellulosic filamentous fungi (and some types of
aerobic bacteria) have the ability to penetrate cellulosic substrates
through hyphal extensions, thus often presenting their {\it free}
cellulase systems in confined cavities within cellulosic particles.
In contrast, anaerobic bacteria lack the ability to effectively
penetrate cellulosic material and perhaps had to find alternative
mechanisms for degrading cellulose.  This led to the development of
{\it complexed} cellulase systems (called cellulosomes) which position
cellulase producing cells at the site of hydrolysis, as observed for
clostridia and ruminal bacteria.

\par

Overall there are three major components of cellulase systems: $(i)$
endoglucanases, which randomly hydrolyze $\beta$-$1,4$ bonds within
cellulose molecules, thereby producing reducing and non-reducing ends;
$(ii)$ exoglucanases, which \cmt{liberate (cleave off) either glucose (glucanohydrolases) or cellobiose (cellobiohydrolase) that serve as major products} from \cmt{reducing or non-reducing ends of cellulose polysaccharide chains;} and $(iii)$ $\beta-$glucosidases which hydrolyze cellobiose yielding glucose (the major product of cellulose hydrolysis used by microorganisms as energy source). For details see \cite{LYND02}.

\subsection{Quantities to monitor and two basic bio-mechanisms} 

{We first consider the case of populations with one trait.  In our
  analysis we let $n=n(t)$ denote the total \cmt{number} of the
  microorganism that degrades cellulose, while  the total \cmt{number} of
  {endoglucanases and exoglucanases} produced by the microorganism are
  denoted by $e_{1}=e_1(t)$ and $e_{2}=e_2(t)$, respectively.

\par

We view cellulose as a crystalline conglomerate of fibers (chains of
polysaccharides).  According to our discussion above, during the first
stage of the degradation the endoglucanase enzyme $e_1$ weakens
fibers, which means that $e_1$ randomly cuts the fibers by creating
reducing and non-reducing ends that then serve as landing sites for
exoglucanases $e_2$.  Viewing cellulose as a three-dimensional
structure, one can imagine that structure with punctures or cuts after
the first stage.  It is still the same cellulose but with more `cuts'
that serve as landing sites for the exoglucanase enzymes $e_2$.

\par

\cmt{ There are two different complexes of exoglucanases that are
  able to cleave off (liberate) cellobiose units from the cellulose
  chains. These are exoglucanase CBHI and exoglucanase CBHII.  The
  enzymes of the first type are able to attach to reducing ends, and
  the second to non-reducing ends (see \cite[p. 512]{LYND02}).  In our
  models, for simplicity, we do not distinguish between the two
  types; both are represented by $e_2$.  For this reason, we do not
  differentiate between reducing and non-reducing ends, and call them
  `landing sites' instead.  We add that each landing site
  (created by the endoglucanase $e_1$) always contains one reducing
  end and one non-reducing end.  In our model, however, we allow each
  landing site to host only one enzyme.}   Once the exoglucanase $e_2$ lands on the {chain}, it cleaves off cellobiose from the chain of polysaccharides.  In this treatment we do not consider the third type of enzyme ($\beta-$glucosidase) and treat cellobiose as the major product of
degradation.  We instead assume that some portion $\theta_p \in [0,1]$
of cellobiose is consumed by the microorganism that produced it, while
the rest diffuses and is freely available for general consumption.  We
let $p(t)$ denote the total of the freely available cellobiose.  }


\par\medskip

Thus in our model, there are two main stages in which the exoglucanase
$e_2$ produces cellobiose $p(t)$: first, the enzyme locates {a
  landing site} and attaches
itself to the chain there; next, it keeps cleaving off cellobiose
units at a certain rate until it either disintegrates or detaches from
the chain.  This leads to two basic modeling approaches, which we call
cleaving mechanisms.

\subsubsection{Cleaving Mechanism 1 }

Since the time spent by an individual exoglucanase enzyme $e_2$
locating a 	{landing site}  may differ
significantly from the time it is attached to the landing site, it is
useful to consider two states for exoglucanase, namely {\it detached}
and {\it attached} states.  In the first mechanism we distinguish
them, letting $e_{2D}(t)$ represent the amount of detached $e_2$
(which may wander freely or on a leash, that is attached to a
bacterial cell wall), and $e_{2A}(t)$ the amount attached to
a landing site.
\cmt{Denoting the total number of landing sites by $T(t)$ and the
number of {\em unoccupied} landing sites by $S(t)$, it then follows that}
\begin{equation}
\label{TSrel} T(t)=S(t)+{e}_{2A}(t)\,.
\end{equation}

We suppose that, at any moment of time, unoccupied spots $S(t)$ become
occupied (or attacked) by the detached enzymes $e_{2D}$ at a certain
rate to be described. Next, we assume that an individual attached
enzyme $e_{2A}$ cleaves off cellobiose units from the {cellulose
  chain}, again with a given rate. Also, we assume
that some proportion of the (attached) enzyme $e_{2A}$ detaches from
{a landing site}, and that some fraction $\theta_r\in[0,1]$ of those {sites} \cmt{become unavailable for landing, that is, the landing sites are destroyed.}

\subsubsection{Cleaving Mechanism 2}

The second mechanism is somewhat simplified.  It may be used to
describe complex cellulases where exoglucanases are not entirely free
(they are attached to bacterial cell walls, and once bacteria leaves
the spot the enzyme becomes detached {from the landing site} as well).  Here we suppose that at any moment of time, all existing $T(t)$ {landing sites} are available for an attack by the enzyme $e_2$.  The {landing sites}
$T(t)$ are attacked with a certain rate $b(T(t))$
and a certain (average) amount $q>0$ of cellobiose units is cleaved
off by each individual enzyme $e_2$, after which the enzyme $e_2$
detaches itself.  We view such an attack as instantaneous.  Thus,
after such an (instantaneous) attack, all $T(t)$ {landing sites}
 are again unoccupied.  We will assume that after
the attack a certain portion $\theta_r\in[0,1]$ of the (attacked)
{landing sites} \cmt{become unavailable, that
  is, destroyed}.  In that scenario the two
processes, finding a {landing site} and
cleaving off cellobiose, are lumped together (with a hidden assumption
that enzymes cannot remain attached to a chain for a very long time).

\begin{remark} \rm This first mechanism is more realistic since it
  takes into account time spent by the enzyme on {a site}.  This mechanism can be employed for modeling
  systems where both non-complex and complex cellulases are present.
\end{remark}

\begin{remark}\rm
  In the second mechanism the {landing sites} $T(t)$ serve as "prey" and $e_2$ as "predators", with one difference: the enzymes attack the prey, use it and leave it alone. After an attack only a certain proportion of the sites is destroyed, while the rest is still usable.
\end{remark}

\section{Cleaving Models}

\label{sec:models}

\cmt{We now develop several models of varying complexity to describe
  cellulose biodegradation.  We begin with a one-trait model
  describing cleaving mechanism 1, which retains the most detailed
  cellulose structure.  This $N$-$S$ system is too cumbersome to be
  effectively analyzed, so we reduce it by removing the detailed
  cellulose structure, to obtain the $S$ system, a set of 7 equations
  which models cleaving mechanism 1.  We then modify the $S$ system to
  model cleaving mechanism 2, which yields the $T$ system, consisting
  of 6 equations.  Finally, we adapt the simplest of these, the
  $T$ system, to a multiple trait model, in which there can be several
  species of microorganism consuming the same cellulose.}

\subsection{$N$-$S$-model} \label{NSmodel}

We first consider a model in which we monitor groups of cellulose
chains consisting of $l \geq 1$ cellobiose units; in that case we say
that the chain has length $l$.  This allows us to develop a
fundamental model incorporating cleaving mechanism 1.

\par

\noindent{\em Assumptions and notation.}\quad Cellulose chains may
have different topological configurations: they could be linear or
rectangular (when fibers are embedded in a lignin matrix) or they
could have a random three-dimensional structure.  Monitoring the
topology increases the complexity, but it does not provide a better
tool for studying the population dynamics.  After all, it is the
number of landing spots that matters rather than the configuration of
the cellulose chains.  Thus we make no assumption about the
configuration and monitor only the length of its constituent pieces.
Another assumption we make is that the enzyme $e_1$ produces a landing
site without physically cutting the chain. This assumption decreases
complexity in the model while it does not change the dynamics.
Indeed, if we allow $e_1$ to physically cut a chain in the model, then
the chain could be split into two parts when $e_1$ acts.  In that
case, the number of landing sites would be the same as in the scenario
when the landing site is created without a physical cut. Finally, we
impose the requirement that at most one landing site per unit of
cellobiose is allowed; this reflects the fact that cellulose chains
represent systems of discrete units.

\par

We say that a cellulose chain is in the $(l,i)$-state, or is an
$(l,i)$-chain, if it has length $l$ (so consists of $l \geq 1$
cellobiose units) and $i \in \{0,1,\dots,l\}$ landing sites
(previously made by enzyme $e_1$), and let $N^{l,i}(t)$ denote the
number of $(l,i)$-chains.  Recall that $p$ denotes the number of
cellobiose units available for general consumption, $e_1$ denotes the
number of endoglucanase enzymes, which produce landing sites, and
$e_{2D}$ denotes the number of detached exoglucanase enzymes.  We
refine the attached exoglucanase to refer to those enzymes attached to
chains in the $(l,i)$-state by $e_{2A}^{l,i}$, where the \cmt{superscripted}
indices $(l,i)$ are in the set
\begin{equation}\label{ILdef}
\begin{aligned}
(l,i) \in I_L := \{ (\tilde{l},\tilde{i}) \in \mathbb{Z} \times \mathbb{Z}:  1 \leq \tilde{l} \leq L,\, 0\leq \tilde{i}\leq \tilde{l}\}\,.
\end{aligned}
\end{equation}
Here $L$ stands for the maximal number of cellobiose units in
cellulose chains.  We also use the convention that
\begin{equation}\label{convind}
\begin{aligned}
N^{l,i}     &\equiv 0 \quad \mbox{if}  \quad
           (l,i) \not\in I_L,\\
e^{l,i}_{2A} &\equiv 0 \quad \mbox{if}  \quad
           (l,i) \not\in I_L \quad \mbox{or} \quad i=0\,.
\end{aligned}
\end{equation}

\par\medskip

\noindent{\em Enzyme dynamics.}  We assume that the rates of
production of the enzymes $e_1$, $e_2$ by the microorganism and their
degradation rates are fixed.  The enzymes $e_1$ and $e_2$ are
catalyzers which stay in the system as long as they ``live''.  Then
$e_1$ satisfies
\begin{equation}\label{enz1-NS}
\del_t \SP {e}_1(t) = {b}_1 \SP n(t) - d_1  {e}_1(t) \quad  \mbox{with}
\quad b_1,\ d_1>0\,.
\end{equation}

Next, the number of landing sites on $(l,i)$-chains is $T^{l,i}=i\,
N^{l,i}$, so the number of unoccupied landing sites $S^{l,i}(t)$ is
\begin{equation}
\label{Sloc-NS}
  S^{l,i}(t) =  T^{l,i}(t) - {{e}_{2A}^{l,i}(t)}
            = i\, N^{l,i}(t) - {{e}_{2A}^{l,i}(t)}.
\end{equation}
Neglecting saturation effects, we suppose that unoccupied sites
$S^{l,i}$ are attacked by $e_{2D}$ with the rate $\beta^{l,i}S^{l,i}$.
Also, we assume that enzyme $e_2$ located on a chain in $(l,i)$-state
(randomly) detaches from the chain with rate $\sigma^{l,i}>0$.  We let
$\gamma_{r}^{ l,i}>0$ denote the decay rate of an individual landing
site (whether it is occupied or not) and assume that if an occupied {landing site} degrades then the attached enzyme $e_{2A}$ disintegrates
together with it.  This leads to the following set of equations that
monitor the dynamics of the enzyme $e_2$:
\begin{equation}\label{enz2-NS}
\begin{aligned}
\del_t {e}_{2D}(t)  & = {b}_2 \SP n(t)  - \sum_{l,i} \beta^{l,i} S^{l,i}(t) \;{e}_{2D}(t) + \sum_{l,i} \sigma^{l,i}\SP {e}_{2A}^{l,i}(t) - d_{2D} \SP {e}_{2D}(t)\\
 & = {b}_2 \SP n(t)  - \sum_{l,i} \bigg[\beta^{l,i}  \Big( i N^{l,i}(t) - {{e}_{2A}^{l,i}(t)}\Big) {e}_{2D}(t) - \sigma^{l,i} \SP {e}_{2A}^{l,i}(t)\bigg] - d_{2D} \SP {e}_{2D}(t)\\
\del_t  {e}_{2A}^{l,i}(t) & =   \beta^{l,i} S^{l,i}(t) \; {e}_{2D}(t) - \Big(\sigma^{l,i} + d_{2A}^{\,l,i} + \gamma_{r}^{ l,i}\Big)\,{e}_{2A}^{l,i} \\
& = \beta^{l,i}\Big( i N^{l,i}(t) - {{e}_{2A}^{l,i}(t)}\Big) {e}_{2D}(t)  - \Big(\sigma^{l,i} + d_{2A}^{\,l,i} + \gamma_{r}^{ l,i}\Big) {e}_{2A}^{l,i}
\end{aligned}
\end{equation}
where $d_{2D}>0$ and $d_{2A}^{\,l,i}> 0$ are the degradation rates of
$e_{2D}$ and ${e_{2A}^{l,i}}$, respectively, and $b_2>0$ is the
production rate of $e_{2}$, which equals that of $e_{2D}$.

\par\medskip

\noindent{\em Chain dynamics.}  We neglect saturation effects and
assume that the rate with which enzymes $e_1$ produce landing sites on
chains in the $(l,i)$-state is
\begin{equation}
\label{transcut-NS}
  \alpha^{l,i} (l-i)  N^{l,i}(t) {{e}_1(t)} =
   \widehat{\alpha}^{l,i} \SP N^{l,i}(t)\SP  {e}_1(t)\,,\quad
   \mbox{with} \quad
  \widehat{\alpha}^{l,i} = \alpha^{l,i} {(l-i)}
\end{equation}
where the multiplier $(l-i)$ reflects the requirement that only one
landing site per cellobiose unit is allowed.  We assume that a
freshly made landing site cannot be instantaneously occupied and
different cuts don't occur simultaneously, so that \eqref{transcut-NS} is the transition rate of states
$(l,i) \to (l,i+1)$ due to the action of $e_1$.

\par

Recall that $\gamma_{r}^{ l,i}>0$ is the decay rate of an individual
landing site on a chain in $(l,i)$-state, so that the rate of transition $(l,i) \to
(l,i-1)$ due to degradation of landing sites is
\begin{equation}
\label{translsd-NS} \widehat{\gamma}^{l,i} N^{l,i}(t), \quad
\mbox{with} \quad \widehat{\gamma}^{l,i}=i \gamma_{r}^{ l,i}\,.
\end{equation}

\par

\cmt{

Let $q^{l,i}>0$ denote the rate of production of cellobiose by an
individual enzyme attached to an $(l,i)$-chain. Then, the total rate of
cellobiose production by the enzymes $e^{l,i}_{2A}$ is
\begin{equation}
\label{transclv-NS}
q^{l,i} {{e}_{2A}^{l,i}(t)}.
\end{equation}
The rate $q^{l,i}$ can be expressed as 
\[
q^{l,i}=c^{l,i}+p^{l,i}
\]
where, $c^{l,i}$ is the rate of cleaving that results in the transition $(l,i) \to (l-1,i)$ and $p^{l,i}$ is the rate of cleaving that results in the transition $(l,i)\to(l-1,i-1)$. The latter transition occurs when an enzyme $e_{2A}^{l,i}$ cleaves off a cellobiose unit and when moving along the chain reaches the next cellobiose unit that also contains a landing site (this results in the decrease of landing sites by one on the given chain). We note that $c^{l,l}=c^{l,0}=0$ and $p^{l,0}=p^{l,1}=0$ for any $l$.
}


Let $\theta_r>0$ denote the proportion of landing sites that {get
  destroyed} after $e^{l,i}_{2A}$ detaches from a
chain or dies. This contributes to the transition $(l,i) \to (l,i-1)$
and the corresponding rate is
\begin{equation}\label{transdun-NS}
  \widehat{\theta}^{l,i} \, {e}_{2A}^{l,i}(t)\,
   \quad \mbox{with} \quad
  \widehat{\theta}^{l,i}=
    \theta_r  (\SP \sigma^{l,i}+d_{2A}^{\,l,i})
\end{equation}

Combining \eqref{transcut-NS}--\eqref{transdun-NS} we obtain the
equations that monitor the dynamics of $N^{l,i}(t)$, namely
\cmt{
\begin{equation}\label{Neqn_NS_prelim}
\begin{aligned}
  \del_t N^{l,i}(t) = r^{l,i} & +
\Big({\widehat{\alpha}^{l,i-1}}N^{l,i-1}(t)-{\widehat{\alpha}^{l,i}}
N^{l,i}(t)\Big) {e}_1(t)
 + \Big(\widehat{\gamma}^{l,i+1} N^{l,i+1}(t) - \widehat{\gamma}^{l,i}
 N^{l,i}(t)\Big) \\ &
+ \Big( c^{l+1,i} e_{2A}^{l+1,i} + p^{l+1,i+1} e_{2A}^{l+1, i+1} -(c^{l,i}+p^{l,i}) e_{2A}^{l,i} \Big) \\
&+\Big( \widehat{\theta}^{l,i+1} \SP {e}_{2A}^{l,i+1}(t) -
\widehat{\theta}^{l,i}\SP {e}_{2A}^{l,i}(t)\Big)
 - \gamma_{\varrho}^{l,i}N^{l,i}(t),
\end{aligned}
\end{equation}
}
where $r^{l,i}$ is the unit rate of production of cellulose, and
$\gamma_{\varrho}^{l,i}$ is the rate at which the cellulose naturally
decays or becomes unavailable to the microorganism (that is, decay not
directly attributable to the bacteria).  We assume for simplicity that
the cellulose provided by the environment has no {landing sites}, so
that $r^{l,i}=0$ for $i\geq 1$, and in particular, the sum
$\sum_ii\,r^{l,i}=0$.

\cmt{
 When polymer chains are long, that is $l$ is large, and the landing sites are spaced out, which happens when $i$ is small relative to $l$, the coefficients $p^{l,i}$ can be neglected. To simplify the equation \eqref{Neqn_NS_prelim} we drop  the coefficients $p^{l,i}$ except when $i=l$, corresponding to the boundary case when the number of sites $l$ equals to the number of cellobiose units. The coefficients $p^{l,l}$ cannot be dropped because cleaving cellobiose on the chain in state $(l,l)$ always leads to $(l-1,l-1)$. Moreover, dropping the coefficients $p^{l,l}$ would lead to a loss of conservation of the total cellulose in the system  when consumers are not present. This assumption leads to
\[
q^{l,i} = c^{l,i} \quad \text{for} \quad l \neq i \quad \text{and} \quad q^{l,l}=p^{l,l}
\]
and hence \eqref{Neqn_NS_prelim} becomes
\begin{equation}\label{Neqn-NS}
\begin{aligned}
  \del_t N^{l,i}(t) = r^{l,i} & +
\Big({\widehat{\alpha}^{l,i-1}}N^{l,i-1}(t)-{\widehat{\alpha}^{l,i}}
N^{l,i}(t)\Big) {e}_1(t)
 + \Big(\widehat{\gamma}^{l,i+1} N^{l,i+1}(t) - \widehat{\gamma}^{l,i}
 N^{l,i}(t)\Big) \\ &
+ \Big( {{q}^{l+1,i}} \SP {e}_{2A}^{l+1,i}(t) + \cmt{\delta_{li}q^{l+1,i+1}e_{2A}^{l+1,i+1}} -
{{q}^{l,i}}\SP {e}_{2A}^{l,i}(t)\Big)\\
&+\Big( \widehat{\theta}^{l,i+1} \SP {e}_{2A}^{l,i+1}(t) -
\widehat{\theta}^{l,i}\SP {e}_{2A}^{l,i}(t)\Big)
 - \gamma_{\varrho}^{l,i}N^{l,i}(t).
\end{aligned}
\end{equation}
where $\delta_{li}$ denotes the Kronecker delta.
}

\par\medskip

\noindent{\em Population dynamics.} Let $\theta_p\in[0,1]$ denote the
proportion of produced cellobiose that becomes available for everyone.
Then, using \eqref{transclv-NS}, the equations for the total amount
$p(t)$ of cellobiose available for everyone, and the total population
$n(t)$ of the microorganism are respectively
\begin{equation}\label{PNeqn-NS}
  \begin{aligned}
  \del_t p(t) &= \theta_{p}  \sum_{l,i} {q}^{l,i} {e}_{2A}^{l,i}(t) - \gamma\, n(t)\,p(t)-\gamma_p \,p(t),\quad\mbox{and}\\[2pt]
\del_t n(t) &=  \frac{\mu\, n(t)}{\bar{n}+n(t)} \bigg( \gamma\, p(t) +
(1-\theta_p) \sum_{l,i} q^{l,i} \,{e}_{2A}^{l,i}(t)\bigg)
  -\gamma_n\, n(t),
  \end{aligned}
\end{equation}
where $\gamma$ is the
consumption rate, $\mu$ is the conversion efficiency, and $\gamma_p$,
$\gamma_n$ are decay rates of $p$ and $n$, respectively.  Here $\bar
n$ represents a critical population threshold: if $n$ is large, $n =
O(\bar n)$, the growth depends only on the cellobiose supply, while if
$n$ is small, $n \ll \bar n$, the growth is linear but with small
growth rate, so the population is unlikely to survive (since
$\mu/\bar{n}<\gamma_n$).

\par\medskip

\cmt{
\noindent{\em Summary.} Figure \ref{transfig} shows possible states of the resource, state transitions and the rates at which these occur.

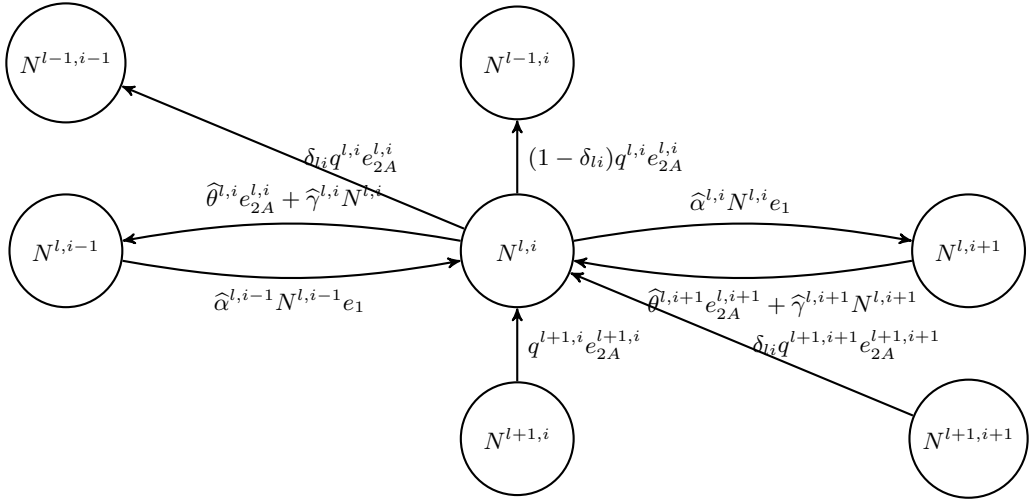
\begin{figure}[H]
\begin{center}
\begin{tikzpicture}[scale=1,->,>=stealth',auto,node distance=3cm,
  thick,main node/.style={circle,draw,font=\sffamily\Large\bfseries}]
\node[circle,draw, minimum size = 1.5cm]
(circ_l_i) at (6,0)  {{\footnotesize $N^{l,i}$}};
\node[circle,draw, minimum size=1.5cm]
(circ_l_im1) at (0,0)  {{\footnotesize $N^{l,i-1}$}};
\node[circle,draw, minimum size=1.5cm]
(circ_l_ip1) at (12,0)  {{\footnotesize $N^{l,i+1}$}};
\node[circle,draw, minimum size=1.5cm]
(circ_lm1_i) at (6,+2.5)  {{\footnotesize $N^{l-1,i}$}};
\node[circle, draw, minimum size=1.5cm]
(circ_lm1_lm1) at (0,+2.5)  {{\footnotesize $N^{l-1,i-1}$}};
\node[circle, draw, minimum size=1.5cm]
(circ_lp1_i) at (6,-2.5)  {{\footnotesize $N^{l+1,i}$}};
\node[circle, draw, minimum size=1.5cm]
(circ_lp1_lp1) at (12,-2.5)  {{\footnotesize $N^{l+1,i+1}$}};

\draw [->] (circ_l_i) -- node[right] {\footnotesize$\delta_{li}q^{l,i}e_{2A}^{l,i}$} (circ_lm1_lm1);

\draw [->] (circ_l_i) -- node[right] {\footnotesize$(1-\delta_{li})q^{l,i}e_{2A}^{l,i}$} (circ_lm1_i);

\draw [->] (circ_l_i)  to [out=10, in=170] node[above] 
{\footnotesize${\widehat{\alpha}^{l,i}  N^{l,i}{e}_1 \,}$} (circ_l_ip1);

\draw [->] (circ_l_i) to [out=170, in=10] node[above]
{\footnotesize ${\;\widehat{\theta}^{l,i} {e}_{2A}^{l,i} + \widehat{\gamma}^{l,i}N^{l,i} }$\qquad} (circ_l_im1);

\draw [->] (circ_l_im1) to [out=-10, in=190] node[below] 
{\footnotesize${\widehat{\alpha}^{l,i-1}  N^{l,i-1}{e}_1 \,}$} (circ_l_i);

\draw [->] (circ_l_ip1) to [out=190, in=-10] node[below]
{\footnotesize \quad\qquad ${\;\widehat{\theta}^{l,i+1} {e}_{2A}^{l,i+1} + \widehat{\gamma}^{l,i+1}N^{l,i+1} }$} (circ_l_i);

\draw [->] (circ_lp1_i) -- node[right] {\footnotesize$q^{l+1,i}e_{2A}^{l+1,i}$} (circ_l_i);

\draw [->] (circ_lp1_lp1) -- node[right]{\footnotesize$\delta_{li} q^{l+1,i+1}e_{2A}^{l+1,i+1}$} (circ_l_i);

\end{tikzpicture}
\end{center}\caption{Transition of cellulose chains.}\label{transfig}
\end{figure} 
}

\begin{remark} \rm
  The processes of creating a landing site or cleaving
  off a cellobiose unit may depend on the configuration
  of the chain, the crystallinity of the cellulose as well as the
  lifetime of the enzymes.  Thus it is possible that within a given
  period of time (no matter how short) more than one landing site is
  created or two or more cellobiose units are cleaved off from the
  same chain.  For simplicity, we discount transitions other than
  $(l,i) \to (l,i+1)$ and $(l,i) \to (l-1,i)$, essentially assuming
  instantaneous transition.  This approach is justified provided that
  the number and size of chains is very large compared to the amount
  of enzymes $e_1$, $e_2$, and the likelihood that two landing sites
  are created or more than two cellobiose units are cleaved off from
  the same chain simultaneously (or during a short period of time) is
  extremely small.

Another way to justify this assumption is to consider a
time-continuous Poisson counting process, that corresponds to the
events of creating a landing site and/or cleaving off cellobiose.  Any
instance when a landing site is created (that is the moment it becomes
available for use by $e_2$) or cellobiose unit is cleaved off the
chain can be counted as an event. It is well-known that the
probability of two or more events happening instantaneously is zero
(in other words the probability that two events take place over the
time $\Delta t$ is $o(\Delta t)$; 
for details see \cite{LWL95,TLKAR98}.
\end{remark}

\subsection{Reduction to $S$-model for Cleaving Mechanism 1}

We now develop a simpler model by reducing the $N$-$S$-model,
consisting of \eqref{enz1-NS}, \eqref{enz2-NS}, \eqref{Neqn-NS} and
\eqref{PNeqn-NS}.  It is convenient to allow the indices $l$ and $i$
to run through all of $\mathbb{Z}$.  This augments the previously
defined system, but by choosing appropriate constants, we can ensure
that $N^{l,i}$ and $e_{2A}^{l,i}$ vanish for all times whenever
$l \leq 0$, $i < 0$, or $i>l$ and\cmt{, in addition, $e_{2A}^{l,0}=0$ for any $l$}.  We then define the quantities
\begin{equation}
\label{totqntdef}
\begin{aligned}
  \varrho(t) &= \sum_{l,i} l N^{l,i}(t),&\qquad &
  e_{2A}(t)=\sum_{l,i} e_{2A}^{l,i}(t), \\
  T(t)&=\sum_{l,i} T^{l,i}(t)=\sum_{l,i} iN^{l,i}(t), &&
  S(t)=\sum_{l,i}S^{l,i}(t),\\
 \end{aligned}
\end{equation}
which represent the total number of cellulose units, attached
exoglucanase enzymes, landing sites and unoccupied sites,
respectively.  Note that each of these sums is finite provided we
specify appropriate initial conditions. We next assume that the constants are independent of $l$ and $i$, so that for each
$l,\ i\in\mathbb{Z}$,
\begin{equation}
\label{locrateeqty}
  \beta^{l,i}=\beta\,, \quad
  \sigma^{l,i}=\sigma\,, \quad
  \gamma_{r}^{ l,i}=\gamma_r \,, \quad
  d_{2A}^{\,l,i}=d_{2A}, \quad
  \alpha^{l,i}=\alpha, \quad
  q^{l,i}=q\,, \quad
  \gamma_{\varrho}^{ l,i}=\gamma_\varrho \,.
\end{equation}
Summing over $l,i$ in \eqref{enz2-NS} and using \eqref{Sloc-NS}, we get
\begin{equation}
\label{totenz2}
\begin{aligned}
  \del_t {e}_{2D}(t)  & = {b}_2 \SP n(t) - \beta S(t) \SP {e}_{2D}(t)
 	+ \sigma \SP {e}_{2A}(t) - d_{2D} \SP {e}_{2D}(t),\\
  \del_t  {e}_{2A}(t) & =   \beta S(t) \SP {e}_{2D}(t) -
	 \big(\sigma + d_{2A} + \gamma_r\big){e}_{2A}(t)\,.
\end{aligned}
\end{equation}

\par

To obtain equations for $\varrho$ and $T$, we scale and add equations
\eqref{Neqn-NS}.  First, we recall
\begin{equation}
\label{cflux}
  \sum_{l,i} i\,r^{l,i}=0 \quad\text{and define}\quad
  r = \sum_{l,i} l\,r^{l,i} = \sum_l l\,r^{l,0}\,,
\end{equation}
so that $r$ represents the total production of cellulose by the environment.
Using \eqref{transcut-NS} and making the change of variable
$j=i-1$ we obtain
\begin{equation*}
\begin{aligned}
\sum_{l,i}i\,&\Big(\widehat{\alpha}^{l,i-1}N^{l,i-1}(t)-\widehat{\alpha}^{l,i} N^{l,i}(t)\Big) \\
&= {\alpha} \sum_{l,i}\Big( i (l-(i-1))N^{l,i-1}(t) - i(l-i)N^{l,i}(t)\Big)
 \\
&={\alpha} \Big( \sum_{l,j}(j+1)(l-j)N^{l,j}(t) - \sum_{l,i}i(l-i)N^{l,i}(t)\Big) \\
& = {\alpha}\Big(  \sum_{l,j} (l-j)N^{l,j}(t)\Big)
 = {\alpha}\SP \big(\varrho(t) - T(t)\big)
\end{aligned}
\end{equation*}
and similarly, using \eqref{translsd-NS} and $j=i+1$,
\begin{equation*}
\begin{aligned}
\sum_{l,i}i\,&\Big(\widehat{\gamma}^{l, i+1} N^{l,i+1}(t) - \widehat{\gamma}^{l,i} N^{l,i}(t)\Big)\\
 & = \gamma_r\sum_{l,i} \Big( i(i+1) N^{l,i+1}(t) - i^2 N^{l,i}(t) \Big)   \\
 & = \gamma_r \Big( \sum_{l,j}(j-1)j N^{l,j}(t) - \sum_{l,i}i^2 N^{l,i}(t) \Big)   \\
& = - \gamma_{r}\sum_{l,j}  j N^{l,j}(t) = -\gamma_r T(t)\,.
\end{aligned}
\end{equation*}

\cmt{ Next, recalling that $e_{2A}^{L+1,i} \equiv 0$ for all $i$, we compute
\begin{equation}\label{calc1}
\begin{aligned}
&	\sum_{l=1}^L \sum_{i=0}^l\,i \Big( {q}^{l+1,i}\SP {e}_{2A}^{l+1,i} + \delta_{li} q^{l+1,i+1}e_{2A}^{l+1,i+1} - {q}^{l,i}\SP {e}_{2A}^{l,i}\Big) \\
&= q\sum_{k=2}^{L} \sum_{i=0}^{k-1}\, i \SP {e}_{2A}^{k,i} - q\sum_{l=1}^L \sum_{i=0}^l i \SP {e}_{2A}^{l,i} + q \sum_{l=1}^{L-1} l e_{2A}^{l+1,l+1}\\
& = - q\sum_{l=1}^{L} \, l \SP {e}_{2A}^{l,l} + q \sum_{l=2}^L (l-1) e_{2A}^{l,l}\\
& = - q \sum_{l=1}^L e_{2A}^{l,l}.
\end{aligned}
\end{equation}
}
and, using \eqref{transdun-NS},
\begin{equation*}
\begin{aligned}
\sum_{l,i}i\,&\Big( \widehat{\theta}^{l,i+1}\SP
	 {e}_{2A}^{l,i+1}(t) - \widehat{\theta}^{l,i}\SP {e}_{2A}^{l,i}(t)\Big)\\
& = {\theta_r (\sigma+d_{2A})}\sum_{l,i}
	\Big( i\,{e}_{2A}^{l,i+1}(t)-i\,{e}_{2A}^{l,i}(t)\Big)\\
& = {\theta_r (\sigma+d_{2A})} 
   \Big(\sum_{l,j} (j-1)\,{e}_{2A}^{l,j}(t)- \sum_{l,i}i\,{e}_{2A}^{l,i}(t)\Big)\\
& = - {\theta_r (\sigma+d_{2A})} \sum_{l,j} {e}_{2A}^{l,j}(t)
= - \theta_r (\sigma+d_{2A}){{e}_{2A}(t)}\,.
\end{aligned}
\end{equation*}

\par

Combining the above identities with \eqref{Neqn-NS} and
\eqref{totqntdef} we conclude
\begin{equation}\label{Ttoteqn}
\begin{aligned}
  \del_t T(t) & = \del_t \Big(\sum_{l,i} i N^{l,i}(t) \Big) \\
  & = {\alpha} \SP
    \big(\varrho(t) - T(t)\big)\, {e}_1(t)
   - \theta_r (\sigma+d_{2A})\, {{e}_{2A}(t)}
   -(\gamma_r+\gamma_\varrho)\, T(t)  \cmt{-q\sum_{l=1}^L e_{2A}^{l,l}}.
\end{aligned}
\end{equation}
Next, referring to \eqref{Sloc-NS}, subtracting
$\del_t {e}_{2A}$ from \eqref{Ttoteqn} and using
\eqref{totenz2}, we obtain
\begin{equation}
\label{Stoteqn}
\begin{aligned}
  \del_t S(t) 
  & = {\alpha} \SP \big(\varrho(t) - S(t)- {{e}_{2A}(t)}\big)\,{e}_1(t) \cmt{-q\sum_{l=1}^L e_{2A}^{l,l}} -  \beta S(t) \SP {{e}_{2D}(t)}\\
  & \qquad{}
  +\Big((1-\theta_r)(\sigma+d_{2A})-\gamma_\varrho\Big) {{e}_{2A}(t)}
  - (\gamma_r+\gamma_\varrho )\, S(t)\,.
\end{aligned}
\end{equation}

\par

We now multiply each term on the right-hand side of \eqref{Neqn-NS} by
$l$ and sum to get an equation for $\varrho(t)$.  First, using
\eqref{transcut-NS} and \eqref{translsd-NS}, we get
\begin{equation*}
\begin{aligned}
\sum_{l,i} l\,\Big(\widehat{\alpha}^{l,i-1}N^{l,i-1}(t)-\widehat{\alpha}^{l,i} N^{l,i}(t)\Big) 
&= {\alpha}\Big( \sum_{i,j}(l-j)N^{l,j}(t) -  \sum_{l,i}(l-i)N^{l,i}(t) \Big)  =0\,,\\[6pt]
 \sum_{l,i} l\,\Big(\widehat{\gamma}^{l,i+1} N^{l,i+1}(t) - \widehat{\gamma}^{l,i} N^{l,i}(t)\Big)  
& = \gamma_r \Big( \sum_{l,j} l\,j\,N^{l,j}(t) - \sum_{l,i} l\,i\, N^{l,i}(t) \Big)=0\,.
\end{aligned}
\end{equation*}
\cmt{
Similarly, we compute
\begin{equation}\label{calc2}
\begin{aligned}
& \sum_{l=1}^L \sum_{i=0}^l l \Big( {q}^{l+1,i}\SP {e}_{2A}^{l+1,i} + \delta_{li} q^{l+1,i+1} e_{2A}^{l+1,i+1}- {q}^{l,i}\SP {e}_{2A}^{l,i}\Big) \\ 
&= q\sum_{l=2}^{L} \sum_{i=0}^{l-1} (l-1) \SP {e}_{2A}^{l,i} - q \sum_{l=1}^L \sum_{i=0}^l l \SP {e}_{2A}^{l,i} + q \sum_{l=1}^{L-1} l e_{2A}^{l+1,l+1}\\
&= q\sum_{{l=2}}^{L} \sum_{i=0}^{l-1} (l-1) \SP {e}_{2A}^{l,i} - q \sum_{l=1}^L \sum_{i=0}^l (l-1) \SP {e}_{2A}^{l,i} - q e_{2A} + q \sum_{l=2}^L (l-1) e_{2A}^{l,l}\\
&= - q e_{2A}.
\end{aligned}
\end{equation}
}

and, using \eqref{transdun-NS},
\begin{equation*}
\sum_{l,i} l\Big( \widehat{\theta}^{l,i+1}\SP {e}_{2A}^{l,i+1}(t)
	 - \widehat{\theta}^{l,i}\SP {e}_{2A}^{l,i}(t)\Big) =
 {\theta_r  (\sigma+d_{2A})} \sum_{l}l\Big[
  \sum_{i} {e}_{2A}^{l,i+1}(t) - \SP \sum_{i} {e}_{2A}^{l,i}(t)\Big]=0.
\end{equation*}
Combining the above expressions and using
\eqref{totqntdef},\eqref{locrateeqty} and \eqref{cflux},  we obtain
\begin{equation}\label{totrho}
  \del_t \varrho(t)  = \del_t \Big(\sum_{l,i} l N^{l,i}(t)\Big)
  = r - q \, {{e}_{2A}(t)} - \gamma_\varrho\,\varrho(t)\,.
\end{equation}

\par

Finally, by \eqref{transclv-NS}, \eqref{totqntdef} and
\eqref{locrateeqty}, equations \eqref{PNeqn-NS} become
\begin{equation}\label{totpn}
  \begin{aligned}
  \del_t p(t) &= \theta_{p} \, q \, {{e}_{2A}(t)}
       - \gamma\, n(t)\,p(t)-\gamma_p\, p(t)\\[2pt]
  \del_t n(t) &=  \frac{\mu n(t)}{\bar{n}+n(t)}
     \Big( \gamma\,p(t) + (1-\theta_p) \, q\, {
       {e}_{2A}(t)}\Big)
   -\gamma_n \, n(t).
  \end{aligned}
\end{equation}

\par

\noindent{\em $S$-system.}
\cmt{
Note that the equation for $S$ in \eqref{Stoteqn} can be expressed as
\begin{equation}\label{RS}
\left\{\begin{aligned}
&\del_t S(t)  = \alpha \, \bigg(\varrho(t) - S(t)-
{{e}_{2A}(t)}\Big(1+\frac{q \SP e^b_{2A}}{\alpha e_1e_{2A}}\Big)\bigg)\,{{e}_1(t)} -  \beta \, S(t) \, {{e}_{2D}(t)}\\
  & \qquad\qquad
+\Big((1-\theta_r)\,(\sigma+d_{2A})-\gamma_\varrho\Big)\, {{e}_{2A}(t)}
- (\gamma_r+\gamma_\varrho )\, S(t)\,, \quad e^{b}_{2A}:=\sum_{l=1}^L e^{l,l}_{2A}.
\end{aligned}\right.
\end{equation}
When the polymer chains are large, that is $L>>1$, one would expect that the proportion of cellobiose units $lN^{l,l}$ would be small compared to the total number of cellobiose $\varrho = \sum_{l,i} N^{l,i}$ and therefore one can expect $e_{2A}e_1 >> e_{2A}^b $. Then, combining the above equations and dropping the term $\frac{q \, e^b_{2A}}{\alpha e_1 e_{2A}}$ in \eqref{RS} we obtain the $S$-system,
}

Combining the above equations we obtain the
$S$-system,
\begin{equation}\label{Ssyst}
\left\{\begin{aligned}
\del_t \SP {e}_1(t) &= {b}_1 \, n(t) - d_1\, {e}_1(t)\\
\del_t {e}_{2D}(t)  & = {b}_2 \, n(t)  - \beta \,S(t) \, {e}_{2D}(t) +
      \sigma \, {e}_{2A}(t) - d_{2D} \,  {e}_{2D}(t)\\
\del_t  {e}_{2A}(t) & =   \beta S(t) \, {e}_{2D}(t) - \big(\sigma +
      d_{2A} + \gamma_r\big)\,  {e}_{2A}(t)\\
\del_t S(t) & = \alpha \, \Big(\varrho(t) - S(t)-
{{e}_{2A}(t)}\Big)\,{{e}_1(t)} -  \beta \, S(t) \, {{e}_{2D}(t)}\\
  & \qquad
+\Big((1-\theta_r)\,(\sigma+d_{2A})-\gamma_\varrho\Big)\, {{e}_{2A}(t)}
- (\gamma_r+\gamma_\varrho )\, S(t)\\
\del_t \varrho(t) &= r -  q \, {{e}_{2A}(t)}  - \gamma_\varrho\,\varrho(t)\\
  \del_t p(t) &= \theta_{p} \, q \, {{e}_{2A}(t)} - \gamma\,
  n(t)\,p(t)-\gamma_p \,p(t)\\ 
\del_t n(t) &=  \frac{\mu\, n(t)}{\bar{n}+n(t)} \Big( \gamma \,p(t) +
(1-\theta_p)\,  q \, {{e}_{2A}(t)} \Big)-\gamma_n\, n(t).
\end{aligned}\right.
\end{equation}

\begin{remark}\rm

\cmt{Note that system \eqref{Ssyst} is obtained by reduction under the assumption that rates are independent of the length of the chain and the number of landing sites. It is a closed system of seven ODE's which still keeps the important cascading structure of enzymes acting one after another on the cellulose. However, this model assumes that the topology of the cellulose chains (their length and the location of landing sites) does not affect the biodegradation process.}
\end{remark}

\begin{remark}\rm
\cmt{ We make the assumption that the coefficients are constant to derive the $S$-system.  Although this assumption is clearly non-physical, it yields a useful model.  We justify dropping the extra correction term which is
small relative to other terms, in the expectation that the mathematical
error in doing so is much smaller than the modeling errors made by taking
the mean coefficient; this is justified by the numerical experiments in Section \ref{sec:num_exp}.
}
\end{remark}

\cmt{
\subsubsection{$S$-system with fast transitions of $e_{2A}$ and $e_{2D}$}

In this section we will get a modified version of the $S$-system given by \eqref{Ssyst} assuming that the transitions $e_{2D} \to e_{2A}$ and $e_{2A} \to e_{2D}$ are fast.

Recall that $\beta S$ is the rate of transition of $e_{2D}$ to $e_{2A}$ and $\sigma$ is the rate of transition to $e_{2D}$. Thus, if one assumes that $\beta,\sigma \to \infty$ so that $\frac{\beta}{\sigma}$ stays bounded, then the equation \eqref{Ssyst}$_3$ tends to the equilibrium relation
\begin{equation}\label{e2Aeq}
0 = (\beta/\sigma) S(t) \, {e}_{2D}(t) - {e}_{2A}(t) \quad\text{or}\quad
  {e}_{2A}(t) = \omega S(t) \, {e}_{2D}(t) \quad \text{with} \quad \omega := \frac{\beta}{\sigma}.
\end{equation}
Thus, the total number of enzymes $e_{2}$ can be written as
\[
e_{2}=e_{2D}+e_{2A} = e_{2D}+\omega S e_{2D} = e_{2D} (1+\omega S),
\]
so we obtain
\begin{equation}\label{e2depend}
e_{2D} = e_2 R_D(\omega S) \quad\text{and}\quad e_{2A} = e_2 R_A(\omega S) \quad \text{where} \quad R_D(x):=\frac{1}{1+x},\,\,  R_A(x):=\frac{x}{1+x}\,.
\end{equation}
%
Assume next that $d_{2A}=d_{2D}$. Then the total number of enzymes $e_{2}$ satisfies the equation 
\begin{equation}\label{e2eqn}
\del_t e_2 = b_2 n - d_2 e_2 -\gamma_r e_{2A} = b_2 n - ( d_2 + \gamma_r R_A(\omega S)) e_2
\end{equation}
where we have set $d_2=d_{2A}=d_{2D}$. 

Thus, replacing \eqref{Ssyst}$_{2,3}$ with \eqref{e2Aeq} and
\eqref{e2eqn} we obtain the system, called the $S_2$ system,
\begin{equation}\label{S2syst}
\left\{\begin{aligned}
\del_t \SP {e}_1(t) &= {b}_1 \, n(t) - d_1\, {e}_1(t)\\
\del_t {e}_{2}(t)  & = {b}_2 \, n(t)  - (d_2 + \gamma_r R_A(\omega S)) e_2\\
\del_t S(t) & = \alpha \, \Big(\varrho(t) - S(t)-
{{e}_{2}R_A(\omega S)}\Big)\,{{e}_1(t)} -  \beta \, S(t) \, e_2 R_D(\omega S)\\
  & \qquad
+\Big((1-\theta_r)\,(\sigma+d_{2A})-\gamma_\varrho\Big)\, {{e}_{2}R_A(\omega S)}
- (\gamma_r+\gamma_\varrho )\, S(t)\\
\del_t \varrho(t) &= r -  q \, {{e}_{2} R_A(\omega S)}  - \gamma_\varrho\,\varrho(t)\\
\del_t p(t) &= \theta_{p} \, q \, {e_2 R_A(\omega S)} - \gamma\,
  n(t)\,p(t)-\gamma_p \,p(t)\\ 
\del_t n(t) &=  \frac{\mu\, n(t)}{\bar{n}+n(t)} \Big( \gamma \,p(t) +
(1-\theta_p)\,  q \, {{e}_2 R_A(\omega S)} \Big)-\gamma_n\, n(t).
\end{aligned}\right.
\end{equation}
The system \eqref{S2syst} is a version of the $S$-system with fast transitions of $e_{2A}$ and $e_{2D}$. 
}

\subsection{Cleaving Mechanism 2: $T$-model}

\cmt{
We now modify the $S_2$-system derived for the cleaving Mechanism 1 to a model derived for the
cleaving Mechanism 2.  We directly work from the already reduced
$S_2$-system \eqref{S2syst}.  Note
that it is of course possible to derive this model from a more fundamental model
in which we directly implement a corresponding cleaving mechanism on chains of length $l$,
similar to our derivation of the $N$-$S$ model above.

In Mechanism 2, we do not distinguish between attached and detached
enzymes $e_2$.  In addition, we do not distinguish between occupied and
unoccupied sites, preferring to count the total number of available {landing sites} $T(t)$, which satisfies \eqref{TSrel}.  Our goal is to rewrite the $S_2$-system \eqref{Ssyst}  in terms of $T(t)$ rather than $S(t)$. 

Recall that the $S_2$-system \eqref{S2syst} is a version of the $S$-system obtained under the assumption that transitions of the enzyme $e_2$ happen fast, which is equivalent to the requirement \eqref{e2Aeq}. Thus, adding \eqref{Ssyst}$_3$ and \eqref{Ssyst}$_4$ and using the equilibrium relation \eqref{e2Aeq}, we conclude that the total number of landing sites $T=S+e_{2A}$ (with fast $e_2$  transitions) satisfies the equation
\begin{equation}
  \label{Teqne2fast}
  \del_t T(t) =
    \alpha\,\big({\varrho(t)}-T(t)\big)\, {{e}_1(t)}
    -\theta_r\,(\sigma+d_{2})\, {{e}_{2}(t)} R_A(\omega S)
    - (\gamma_r+\gamma_\varrho )\, T(t)\,,
\end{equation}
where we employed the relationship $e_{2A}=e_2 R_A(\omega S)$ and $e_{2D}=e_2R_D(\omega S)$, and the assumption $d_{2}=d_{2D}=d_{2A}$.

We next consider the action of the enzyme $e_2$ in the Mechanism 2. In this model,
the concentration of $e_{2A}$ enzymes is always low as those enzymes are used and degraded immediately after they are produced. The landing, cleaving off of a
cellobiose unit, and detaching all occur at approximately the same instant.  Thus
all enzymes $e_2$ should be considered unattached, so should most
closely resemble $e_{2D}$. To model this scenario we consider the $S_2$-system in the asymptotic regime in which
\begin{equation}\label{reqlimS2T}
\text{$\omega=\frac{\beta}{\sigma} \to 0$ \quad and  \quad $q\omega=q\frac{\beta}\sigma \to \widehat{q}$ }.
\end{equation}
The first assumption makes sure that the enzyme $e_2$ detaches immediately after the attack and as a consequence the proportion between $e_{2A}$ and $e_{2D}$ tends to zero, while the second one is necessary for the population $n(t)$ to survive.

Observe that under assumptions \eqref{reqlimS2T}
\[
\begin{aligned}
R_{2A}(\omega S) \sim \omega S\,, \;\; T = S+e_2R_A(\omega S) \sim S\,, \;\; (\sigma+d_2) e_2 R_A(\omega S) \sim \beta e_2 T\,, \;\; q e_2 R_A(\omega S) \sim q (\beta/\sigma) e_2 T
\end{aligned}
\]
and so the $S_2$-system transforms into the {\em $T$-system} for Mechanism 2\/: 
\begin{equation}\label{Tsyst}
\left\{\begin{aligned}
\del_t \SP {e}_1(t) &= {b}_1 \, n(t) - d_1\, {e}_1(t)\\
\del_t {e}_{2}(t)  & = {b}_2 \, n(t)  - d_2 e_2(t)\\
  \del_t T(t) &=
    \alpha\,\big({\varrho(t)}-T(t)\big)\, {{e}_1(t)}
    -\theta_r\, \beta e_2(t) T(t) 
    - \widehat{\gamma}_{\ssst{T}}\, T(t)\\
\del_t \varrho(t) &= r -  \widehat{q} \,T(t) {e}_{2}(t)  - \gamma_\varrho\,\varrho(t)\\
\del_t p(t) &= \theta_{p} \, \widehat{q} \, T(t) e_2(t) - \gamma\,
  n(t)\,p(t)-\gamma_p \,p(t)\\ 
\del_t n(t) &=  \frac{\mu n(t)}{\bar{n}+n(t)} \Big( \gamma \,p(t) +
(1-\theta_p)\,  \widehat{q} {e}_2(t) T(t)) \Big)-\gamma_n\, n(t).
\end{aligned}\right.
\end{equation}
{where we have set $\widehat{q} = q\,(\beta/\sigma)$ and
$\widehat{\gamma}_{\ssst{T}}=\gamma_r+\gamma_{\varrho}$.}
}

\cmt{Comparing this system to \eqref{Ssyst} modeling Mechanism 1, we make
the following observations: first, the dynamics of $e_2$ and $T$ are
simpler than those of $e_{2A}$, $e_{2D}$ and $S$, because we do not
have to account for the different processes for $e_{2D}$ and $e_{2A}$.
On the other hand, the cleaving rate changes from the linear term
$q\,e_{2A}$ in the $S$-system, to the nonlinear term
$\widehat{q}\,T\,e_2$ which models simultaneously finding and
attacking a landing site.  Note that the coefficient
$\widehat{q} = q\,(\beta/ \sigma)$ combines those coefficients corresponding to
finding sites and cleaving off in Mechanism 1, while
$\widehat{\gamma}_{\ssst{T}}=\gamma_r+\gamma_{\varrho}$ is the combined rate of
degradation of landing sites.}

\subsection{Multiple trait $T$-model}

We now extend the $T$-model to a model that allows for several
species of microorganisms feeding on the same cellulose.
Specifically, we introduce populations $n^i$, with $i\in\{1,...M\}$,
equipped with different traits $x^i$; throughout this section, we use
superscripts to distinguish traits.  We assume that Mechanism 2 is
used by each population to cleave off cellobiose, while the different
populations have different rates for the various actions.

\par

As in the $T$-model, we assume that microorganism $n_i$ produces
endoglucanases enzymes $e_1^i$ that make landing sites, and
exoglucanases enzymes $e_2^i$ that cleave off cellobiose from the
cellulose chains.  In analogy with \eqref{Tsyst}$_{1,2}$, we suppose
that the enzymes $e_1^i$, $e_2^i$ are produced by the microorganism
$n_i$ and degrade with fixed rates. This gives the equations
\begin{equation}\label{ENZM}
\begin{aligned}
\del_t {e}_1^i(t) &= {b}_1^i\, n^i(t) - d_1^i\, {e}_1^i(t)\\
\del_t {e}_2^i(t) &= {b}_1^i\, n^i(t) - d_2^i\, {e}_2^i(t)
\end{aligned}
\end{equation}
where $b_1^i$, $b_2^i$ and $d_1^i$, $d_2^i$ are, respectively, the
enzyme generation and death rates, $i\in \{1,...M\}$.

\par

Next, for simplicity we will not differentiate between landing sites
created by the enzymes of different species.  In other words, the
landing sites made by $e_1^i$ are allowed to be used by any enzyme
$e_2^j$ for all $j$.  Then, following Mechanism $2$, and neglecting
saturation effects, we assume that enzymes $e_1^i$ make landing sites
on the cellulose $\varrho$ with the rate
\begin{equation*}
  \alpha^i\, \big( \varrho (t) - T(t) \big)\,
    {{e}_1^i(t)},
\end{equation*}
where $\alpha^i$ is the probability of an individual enzyme $e_1^i$
finding a spot among the $\varrho(t)-T$ available
cellobiose units (reflecting the requirement that only one landing
site per cellobiose unit is allowed), and making a landing site.
Next, we suppose that the landing sites $T$ are attacked by enzymes
$e_2^i$ with the rate
\begin{equation*}
  \beta^i\, T(t)\, {{e}_2^i(t)}
\end{equation*}
where $\beta^i$ is the probability of an individual $e_2^i$ finding
and attaching to a landing spot.  Finally, as above, we let
$\gamma_{r}>0$ be the decay rate of an individual {landing site}	 and
suppose that the portion $\theta_r^i \in [0,1]$ of those ends is not
usable after an attack by $e_2^i$.  Then, in analogy with
\eqref{Tsyst}$_3$, we obtain the equation for $T$ for multiple trait
populations:
\begin{equation}
\label{TMEQ}
\del_t T(t) = \sum_{j} \alpha^{j} \,
      \big( \varrho (t) - T(t) \big) \,{{e}_1^j(t)}
        - \sum_j \theta_r^j\, \beta^j\, T(t)\, {{e}_2^j(t) }
        - \widehat{\gamma}_{\ssst{T}}\,T(t).
\end{equation}

Next, we let $q^i$ be the number of cellobiose units cleaved off by
$e_2^i$ during an attack.  In analogy with \eqref{Tsyst}$_4$, the
dynamics of cellulose $\varrho$ is then given by
\begin{equation}
\label{RHOM}
  \del_t \varrho(t) = r
    - \sum_j \widehat q^j\,T(t)\,{e}_2^j(t)
    - \gamma_{\varrho} \, \varrho(t)
\end{equation}
{where we set $\widehat q^i=q^i\,\beta^i$, the combined rate of
  attack of $e_2^i$.}

\par

We next let $\theta_p^i\in[0,1]$ denote the proportion of produced
cellobiose produced by $e_2^i$ that is made available to everyone.
Then, as in \eqref{Tsyst}$_5$, the equation for the total amount
$p(t)$ of cellobiose available to everyone is
\begin{equation}\label{PEQM}
\del_t p(t) = \sum_{j} \theta^j_{p} \,
\widehat q^j \, {e}_2^j(t)\, T(t)
   - \sum_j \gamma^j\, n^j(t)\,p(t)-\gamma_p\, p(t)\,.
\end{equation}
where $\gamma^j$ is the predation rate of $p$ by $n^j$, and $\gamma_p$
is the decay rate of $p$.

\par

Finally, we consider the dynamics of the population $n^i$.  First,
recall that cellobiose $p(t)$ is available to all species $n^j$,
$j=1,\dots,M$.  Since every species $n^j$ hunts with the predation
rate $\gamma^j$ on the cellobiose $p$, the growth rate of $n^i$ may be
expressed via the logistic term
\begin{equation}\label{GRP}
  \mu^i\, n^i(t)  \frac{\gamma^i\, p(t)}
          {\bar{n}^i + \frac{1}{\gamma^i}\sum_j \gamma^j\, n^j}
\end{equation}
where $\mu^i$ is the conversion efficiency.

\par

Next, comparing to \eqref{PEQM}, the production rate of cellobiose
which is produced by $e_2^j$ and consumed directly on the spot is
given by
 \begin{equation*}
(1-\theta_p^j) \,{\widehat q^j} \, {e}_2^j(t) \, T(t).
\end{equation*}
We assume that in view of the homogeneity and close proximity of
species, the cellobiose produced by $e_2^j$, $j=1,\dots,M$, can be
consumed by the $n^i$; this manifests the cross species interaction.
We express this as
\begin{equation*}
\mu^i \, (1-\theta_p^j)\, \frac{\nu^{ij} \, n^i}{\nu^{ij} \, \bar{n}^i  +
  \sum_s {\nu^{sj}}n^s} \,\widehat q^j \,
{e}_2^j(t) T(t), \quad \mbox{with} \quad \sum_s{\nu^{sj}} = 1,
\end{equation*}
representing the contribution of the energy obtained from the direct
consumption of cellobiose cleaved off by $e_2^j$ to the growth rate of
$n^i$.  Combining these leads to the equation for the dynamics of the
population $n^i$,
\begin{equation}\label{NEQM}
\begin{aligned}
\del_t n^i(t) &=  \mu^i n^i(t)\left( \frac{\gamma^i p(t)}{\bar{n}^i
    + \frac{1}{\gamma^i}\sum_j \gamma^j\, n^j}+ \sum_j
  \frac{(1-\theta_p^j)\,\nu^{ij} }{\nu^{ij} \,\bar{n}_{i} + \sum_s
    {\nu^{sj}}\,n^s} \,\widehat q^j\,
  {e}_2^j(t)\, T(t)\right)-\gamma_{n}^i\, n^i(t)
\end{aligned}
\end{equation}
where $\gamma_{n}^i$ is the death rate of the population $n^i$.

We collect the above equations to obtain the \emph{multiple trait
  $T$-system},
\begin{equation}\label{TMM-1}
  \left\{\begin{aligned}
    \del_t {e}_1^i(t) &= {b}_1^i\, n^i(t) - d_1^i\, {e}_1^i(t) \\
    \del_t {e}_2^i(t) &= {b}_2^i\, n^i(t) - d_2^i \,{e}_2^i(t) \\
    \del_t T(t) &= \big( \varrho (t) - T(t) \big) \,
          \sum_{j} {\alpha}^{j} \, {e}_1^j(t)  -
          \sum_j \theta_r^j {\beta}^j \, {e}_2^j(t) \,T(t) -
         \widehat{\gamma}_{\ssst{T}}T(t)\\
    \del_t \varrho(t) &= r - \sum_j \widehat{q}^j \, {e}_2^j(t) \,
        T(t) - \gamma_{\varrho} \, \varrho(t)\\
    \del_t p(t) &= \sum_{j}  \theta_{p}^j \,\widehat{q}^j\, {e}_2^j(t) \,
        T(t)- \sum_j \gamma^j\, n^j(t)\,p(t)-\gamma_p\, p(t)\\
    \del_t n^i(t) &=  \mu^i\, n^i(t) \frac{\gamma^i\, p(t)
        }{\bar{n}^i + \frac{1}{\gamma^i}\sum_j \gamma^j\, n^j}\\ &\quad +
        \mu^i n^i(t) \,   \sum_j (1-\theta_p^j)\, \frac{\nu^{ij}
        }{\nu^{ij} \bar{n}^i + \sum_s {\nu^{sj}}\,n^s}\, \widehat{q}^j\,
        {e}_2^j(t)\, T(t) -\gamma_{n}^i\, n^i(t)
\end{aligned}\right.
\end{equation}
where $i=1,\dots,M$.

\par\smallskip

\subsubsection{Compatibility with the single trait model}

We now show that the multiple trait $T$-model directly generalizes the
single trait $T$-model by considering two special cases of the
multiple trait model, and confirming that these reduce to the single
trait model.

Our first test is to assume that all but one species (say the $i$-th)
are absent.  That is, we begin with data $n^j(0) = 0$, and similarly
$e^j_1(0) = e^j_2(0) = 0$, for $j \neq i$.  Then \eqref{TMM-1} implies
that for all $t>0$, $j \neq i$, we have $n^j(t) = 0$.  It is then evident
that  equations \eqref{TMM-1}$_{1,2,3,4,5}$ reduce to
\eqref{Tsyst}$_{1,2,3,4,5}$ (for $n=n^i$, etc), and \eqref{TMM-1}$_6$
becomes
\[
  \del_t n^i(t) = \mu^i\,n^i\,\frac{\gamma^i\, p}{\bar{n}^i + n^i}
  +  \mu^i n^i \,(1-\theta_p^i)\,
  \frac{\widehat{q}^i\,{e}_2^i\,T}{\bar{n}^i + n^i}
  - \gamma_{n}^i\, n^i,
\]
which is exactly \eqref{Tsyst}$_6$.

Our second test is to assume that even though there are $M$ different
traits, the coefficients are independent of $i$, $j$, so there is no
way to distinguish different populations in the model.  In this case,
we check the dynamics for the total population $n(t) = \sum_i n^i(t)$,
and similarly $e_1 = \sum_i e_1^i$ and $e_2 = \sum_i e_2^i$.  It is
then clear that \eqref{TMM-1}$_{3,4,5}$ become
\eqref{Tsyst}$_{3,4,5}$, and adding \eqref{TMM-1}$_{1,2}$ over $i$ gives
\eqref{Tsyst}$_{1,2}$.  Finally, adding \eqref{TMM-1}$_{6}$ over $i$
yields
\begin{align*}
\del_t({\textstyle\sum_i}n^i) &= \mu\,
  {\textstyle\sum_i}n^i\,\frac{\gamma\,p}{\bar n +
  {\textstyle\sum_j}n^j}
   + \mu\,{\textstyle\sum_i}n^i\,
    \frac{1-\theta_p}{\bar n + {\textstyle\sum_s}n^s}\,
    \widehat{q}\,{\textstyle\sum_j} {e}_2^j\,T
  - \gamma_n\,{\textstyle\sum_i}n^i,
\end{align*}
which is again exactly \eqref{Tsyst}$_6$.

\section{Cooperation}
\label{sec:coop}

\subsection{Cooperation in the $T$-model} \label{coopTM}

In this section we consider the system \eqref{Tsyst} on different time
scales. We assume that production of enzymes, consumption and creation
of landing sites occurs at a much faster rate than changes in the
population of the microorganism.  In this case, over time scales on
which the population changes, we can assume that equations
\eqref{Tsyst}$_{1,2,3,4,5}$ are at equilibrium, and the dynamics is
driven by the population change \eqref{Tsyst}$_{6}$.  This gives the
system
\begin{equation}\label{Tsyst-2}
\left\{\begin{aligned}
0 &= {b}_1 \, n(t) - d_1 \,{e}_1(t)	 \\[2pt]
0 & = {b}_2 \, n(t)  - d_2 \,{e}_2(t) \\[2pt]
0 & = \alpha\, \big( {\varrho(t)} - T(t) \big)\, {{e}_1(t)}- \theta_r \,\beta \, T(t) \, {{e}_2(t)}  -\widehat{\gamma}_{\ssst{T}}\, T(t)\\
0 & = r - \widehat{q} \,T(t) \, {e}_2(t)- \gamma_{\varrho} \, \varrho(t)\\
0 & = \theta_{p} \, \widehat{q} \,T(t)\, {e}_2(t) - \gamma\, n(t)\,p(t)-\gamma_p\, p(t)\\[2pt]
\del_t n(t) &=  \frac{\mu\, n(t)}{\bar{n}+n(t)} \Big( \gamma\, p(t) +
(1-\theta_p)\,\widehat{q} \,T(t)\, {e}_2(t) \Big)-\gamma_n\, n(t).
\end{aligned}\right.
\end{equation}

We use \eqref{Tsyst-2} to eliminate all the fast variables, to obtain
a single equation for the population $n$, so that the population
growth rate can be understood.  The first two equations give
\begin{equation}
\label{enzeq}
  {e}_i = {k}_i\, n \quad  \mbox{with} \quad
  \quad {k}_i:=\frac{{b}_i}{d_i}, \quad  i=1,2\,,
\end{equation}
and from \eqref{Tsyst-2}$_4$, we have
\begin{equation*}
  \varrho = \frac{r}{\gamma_{\varrho}} -
     \frac{\widehat{q}}{\gamma_{\varrho}}\, T \, {e}_2 
  = \frac{r}{\gamma_{\varrho}} -
     \frac{\widehat{q}\,{k}_2}{\gamma_{\varrho}}\, T \,n. 
\end{equation*}
Plugging these into \eqref{Tsyst-2}$_3$, we get
\begin{equation*}
  0 = \frac{\alpha\, r\,{k}_1}{\gamma_{\varrho}}\,n - T\;\Big(
     \frac{\alpha \,\widehat{q} \, {k}_2\,{k}_1}{\gamma_{\varrho}}\,n^2
     + ({\alpha\, {k}_1} + \theta_r {\beta \,{k}_2})\,n
     + \widehat\gamma_r \Big),
\end{equation*}
so that
\begin{equation}
\label{Teq}
  T = \frac{\alpha\, r\,{k}_1}{\gamma_{\varrho}}\,\frac{n}{P_2(n)},
\end{equation}
where $P_2(n)$ is the quadratic polynomial
\begin{equation}
  \label{P2n}
  \begin{aligned}
    P_2(n) & = c_2\,n^2 + c_1\,n + c_0, \\
    c_2 = \frac{\alpha\,\widehat{q}\,{k}_2\,{k}_1}{\gamma_{\varrho}},\quad
    &c_1 = {\alpha\, {k}_1} + {\theta_r\,\beta \,{k}_2},
    \quad   c_0 = \widehat\gamma_r.
  \end{aligned}
\end{equation}
Next, using these in \eqref{Tsyst}$_5$, it follows that
\begin{equation}
  \label{peq}
  p = \theta_p\,\widehat{q}\, {k}_2\,\frac{T\,n}{\gamma\,n+\gamma_p}
    = \theta_p\,\widehat{q} \, {k}_2\,\frac{\alpha\, r\,{k}_1}
	{\gamma_{\varrho}}
	\;\frac{n^2}{(\gamma\,n+\gamma_p)\,P_2(n)}.
\end{equation}


Finally, we use \eqref{Teq}, \eqref{peq} in \eqref{Tsyst-2}$_6$ to get
the scalar population equation
\begin{equation}
\label{neqbeq}
  \del_t n(t) = n(t) \Big[ B(n) - \gamma_n \Big],
\end{equation}
where
\begin{equation}\label{Bdef}
\begin{aligned}
B(n) &=  \frac{\mu}{\bar{n}+n} \bigg( \gamma \,p
         + (1-\theta_p)\,\widehat{q} \, {k}_2\,T\,n\bigg) \\
     &= \mu\,\widehat{q} \, {k}_2\,\frac{T\,n}{\bar{n}+n}\,\bigg(
     \frac{\gamma\,\theta_p}{\gamma\,n+\gamma_p} + (1-\theta_p) \bigg)
     \\   &= K\,n^2\,\Phi(n),
\end{aligned}
\end{equation}
where the function $\Phi(n)$ and constant $K$ are given by
\begin{equation}\label{PhiK}
\begin{aligned}
  \Phi(n) &= \bigg(\frac{\theta_p}{n+\gamma_p/\gamma} +
     1-\theta_p\bigg)\,\frac{1}{(n+\bar{n})\,P_2(n)}\,, \\[5pt]
  K &= \mu\,\widehat{q} \, {k}_2\,\frac{\alpha\, r\,{k}_1}
	{\gamma_{\varrho}}
    = \frac{\mu\,q\, \beta\,\alpha\,r\,{b}_1\,{b}_2}
           {\gamma_\varrho\,d_1\,d_2}\,.
\end{aligned}
\end{equation}

\subsection{Asymptotics of $B(n)$}

We are interested in the structure of $B(n)$ for
small populations, $n \ll \bar n$.  First, we note that the birth rate
$B(n)$ is positive and satisfies
\[
  B(0) = 0  \quad\text{and}\quad  \lim_{n\to\infty} B(n) = 0,
\]
so that $B$ is globally bounded.

For $n$ small, using \eqref{Bdef}, \eqref{PhiK} and \eqref{P2n}, we have
\begin{equation}
  \label{B/n2}
  \frac{B(n)}{n^2}= K\,\Phi(n) \approx K\,\Phi(0) =
  \frac{K}{\bar{n}\,\widehat\gamma_{r}}\,
  \Big(\theta_p\,\frac{\gamma}{\gamma_p}+1-\theta_p \Big),
\end{equation}
so that $B(n) \sim n^2$ for $n \ll \bar n$.  Thus for small
populations, $B(n)$ is convex, and so superlinear.  This superlinear
birth rate is indicative of cooperative behavior.

More generally, the growth rate $B(n)/n$ increases as long as
$\log\big(n\,\Phi(n)\big)$ does, and
\begin{equation*}
  \frac{\del}{\del n}\Big(\frac{B(n)}{n}\Big)
  = \frac{\del}{\del n}\Big(K\,n\,\Phi(n)\big)
  = K\,n\,\Phi(n)\,\Big(\frac1n + \frac{\del_n\Phi}{\Phi}\Big)
  = K\,n\,\Phi(n)\,\Big(\frac{\del}{\del n}\log(n\,\Phi)\Big),
\end{equation*}
so the system exhibits cooperative behavior as long as
\[
  \frac{\del}{\del n}\log(n\,\Phi) = \frac 1n
  - \frac{\theta_p}{(n+\gamma_p/\gamma)^2}\,
     \Big(\theta_p\,\frac{1}{n+\gamma_p/\gamma} + 1-\theta_p\Big)^{-1}
   - \frac1{n+\bar n} - \frac{2c_2n +c_1}{P_2(n)} > 0\,.
\]
This condition is at least true for $n\in(0,n_*)$, where $n_*$ is the
smallest positive root of this expression; combining the fractions, it
is evident that $n_*$ is the smallest positive root of a fifth-order
polynomial.

Moreover, referring to \eqref{B/n2}, we see that
\begin{equation*}
  \frac{\del}{\del {\theta_p}}K\,\Phi(n)\Big|_{n=0} =
  \frac{K}{\bar{n}\,\widehat\gamma_{r}}\,
  \Big( \frac{\gamma}{\gamma_p}-1\Big),
\end{equation*}
which is positive if and only if $\gamma > \gamma_p$.  For small
population $n$, we expect this to persist: that is,
\[
  \frac{\del}{\del \theta_p}\Big(\frac{B(n)}{n^2}\Big) =
  \frac{\del}{\del {\theta_p}}K\,\Phi(n) > 0 \quad\text{if and only
    if} \quad \gamma > \gamma_p.
\]
Since $\theta_p\in[0,1]$ is the proportion of produced cellulose which
is shared, this last inequality suggests that for small populations
{\it sharing food is beneficial} in terms of growth as long as the
consumption rate $\gamma$ is greater than the decay rate $\gamma_p$ of
the cleaved off cellobiose.







\subsection{Cooperation in multiple-trait $T$-model}

As in Section \ref{coopTM}, we consider the system \eqref{TMM-1} on a
generational time scale.  We again assume that production of enzymes,
consumption and creation of landing sites happens at a much faster
rate then change in the populations $n^i$.  In other words, we assume
that the equations \eqref{TMM-1}$_{1,2,3,4,5}$ are at equilibrium, and
the dynamics of the system is driven by the population equations
\eqref{TMM-1}$_{6}$. This results in the system
\begin{equation}\label{TMMPD}
  \left\{\begin{aligned}
      0 &= {b}_1^i\, n^i(t) - d_1^i\,{e}_1^i(t), \\
      0 &= {b}_2^i\, n^i(t) - d_2^i \,{e}_2^i(t), \\
      0 &= \big( {\varrho (t)} - T(t) \big) \,
      \sum_{j} {\alpha}^{j} \, {e}_1^j(t)  -
      \sum_j \theta_r^j\, {\beta}^j \, {e}_2^j(t) \,T(t) -
      \widehat{\gamma}_{\ssst{T}}\,T(t),\\
      0 &= r - \sum_j \widehat{q}^j \, {e}_2^j(t) \,
      T(t) - \gamma_{\varrho} \, \varrho(t),\\
      0 &= \sum_{j}  \theta_{p}^j \,\widehat{q}^j\, {e}_2^j(t) \,
      T(t)- \sum_j \gamma^j\, n^j(t)\,p(t)-\gamma_p\, p(t),\\
      \del_t n^i(t) &=  \mu^i\, n^i(t)\, \frac{\gamma^i p(t) \,
      }{\bar{n}^i + \frac{1}{\gamma^i}\sum_j \gamma^j\, n^j},\\ &\quad +
      \mu^i\, n^i(t) \,   \sum_j (1-\theta_p^j)\, \frac{\nu^{ij}
      }{\nu^{ij}\, \bar{n}^i + \sum_s {\nu^{sj}\,}n^s} \widehat{q}^j \,
      {e}_2^j(t)\, T(t) -\gamma_{n}^i\, n^i(t).\\
\end{aligned}\right.
\end{equation}

\par

We wish to understand the growth rate of $n^i$ as a function of
$n=(n_1,\dots,n_M) \in \RR^M$.  The first two equations of
\eqref{TMMPD} give
\begin{equation}\label{EEQMM}
   {e}_1^i(t) = {{k}_1^i} n^i(t), \quad
   {e}_2^i = {{k}_2^i} n^i, \quad  \mbox{with} \quad
   k^i:=\frac{b^i}{d^i}, \quad  i=1,\dots,M\,.
\end{equation}
We next write $n$ as a vector and introduce the coefficient
vectors
\begin{equation}
  \label{coeffvec}
\begin{aligned}
  A &= \Big( {\alpha}^j \, {k}_1^j\Big), \quad
  B = \Big(\theta_r^j\, {\beta}^j\, {k}_2^j\Big), \quad
  Q = \Big(\widehat{q}^j\,{k}_2^j\Big), \\
  \Theta &= \Big(\theta_p^j\, \widehat{q}^j\,{k}_2^j\Big), \quad
  \Gamma = \Big(\gamma^j\Big), \quad
  N^k = \Big(\nu^{jk}\Big),
 \end{aligned}
\end{equation}
and denote the scalar product by $\langle\cdot,\cdot\rangle$.  We can
then rewrite \eqref{TMMPD}$_{3,4,5}$ as
\begin{align*}
  0 &= \big( \varrho (t) - T(t) \big)\, \langle A,n\rangle  - \langle B,n\rangle\,T(t) -
  \widehat{\gamma}_{\ssst{T}}\,T(t), \\
  0 &= r - \langle Q,n\rangle\,T(t) - \gamma_{\varrho} \, \varrho(t),\\[2pt]
  0 &= \langle\Theta,n\rangle\,T(t) - \langle\Gamma,n\rangle\,p(t)-\gamma_p\, p(t).
\end{align*}
These immediately yield
\begin{equation}
\label{varrhop}
  \varrho(t) = \frac{1}{\gamma_{\varrho}} \,\big( r - T(t)\, \langle
       Q,n\rangle \big) 
\quad\text{and}\quad
  p(t) = \frac{\langle\Theta,n\rangle}{\langle\Gamma,n\rangle + \gamma_p}\,T(t),
\end{equation}
and, plugging in, we get
\begin{equation}\label{Tsol}
  0 = \frac{r}{\gamma_{\varrho}}\,\langle A,n\rangle - \tau(n)\,T(t),
\quad\text{so that}\quad
  T(t) = \frac{r}{\gamma_{\varrho}}\,\frac{\langle A,n\rangle}{\tau(n)},
\end{equation}
where we have set
\begin{equation}
  \label{taudef}
  \tau(n) = \frac{1}{\gamma_{\varrho}}\,\langle A,n\rangle\,\langle Q,n\rangle
   + \langle A,n\rangle + \langle B,n\rangle + \widehat{\gamma}_{\ssst{T}}\,.
\end{equation}

Finally, using \eqref{varrhop} and \eqref{Tsol} in
\eqref{TMMPD}$_6$, we can write our population system as
\begin{equation}
\label{NEQM-2}
  \del_t n^i  = n^i \, \big( B^i(n) -\gamma_n^i \big),
\end{equation}
where the $i$-th population's birth rate is
\begin{equation}
\label{Bi(n)}
\begin{aligned}
  B^i(n) &= \frac{\mu^i\,\gamma^i\,p(t)}
            {\bar n^i + \frac{1}{\gamma^i}\,\langle\Gamma,n\rangle}
  + \mu^i\,\sum_j \frac{(1-\theta_p^j)\,\nu^{ij}\,\widehat{q}^j\, {k}_2^j\,n^j}
                       {\nu^{ij}\,\bar n^i+\langle N^j,n\rangle}\,T(t)\\
  &= \mu^i\,T(t)\,\bigg(\frac{\gamma^i\,\langle\Theta,n\rangle}
    {(\bar n^i + \frac{1}{\gamma^i}\,\langle\Gamma,n\rangle)\,(\langle\Gamma,n\rangle + \gamma_p)}
  + \sum_j \frac{(1-\theta_p^j)\,\nu^{ij}\,\widehat{q}^j\, {k}_2^j\,n^j}
                       {\nu^{ij}\,\bar n^i+\langle N^j,n\rangle}\bigg)\\
  &= \frac{\mu^i\,r}{\gamma_{\varrho}}\,\frac{\langle A,n\rangle}{\tau(n)}\,
    \bigg(\frac{\gamma^i\,\langle\Theta,n\rangle}
    {(\bar n^i + \frac{1}{\gamma^i}\,\langle\Gamma,n\rangle)\,(\langle\Gamma,n\rangle + \gamma_p)}
  + \sum_j \frac{(1-\theta_p^j)\,\nu^{ij}\,\widehat{q}^j\, {k}_2^j\,n^j}
            {\nu^{ij}\,\bar n^i+\langle N^j,n\rangle}\bigg)\,.
\end{aligned}
\end{equation}
Here the two terms in the growth rate correspond to intentionally shared
food and food consumed as it's produced, respectively.

\par\medskip

\noindent{\bf Asymptotic behavior of $B^i(n)$.}\quad
Assuming the coefficients are positive, we make the following
observations about the birth rate $B^i(n)$.  According to
\eqref{taudef} $\tau(n)$ is quadratic in $n$, while all inner
products in \eqref{NEQM-2} are linear.  It follows immediately that
$B^i(n)\to 0$, and in fact $B^i(n) = O(\frac1n)$ as $n\to\infty$.

We are more interested in the behavior for small populations,
$n\sim0$.  Since $\tau(0) = \widehat{\gamma}_{\ssst{T}}$, no denominators
vanish, and \eqref{Bi(n)} yields
\[
  B^i(n) = O\big(({\textstyle\sum}n)^2\big)  \quad\text{for}\quad
  n \sim 0.
\]
More precisely, recalling that
\[
  \nabla_n\langle V,n\rangle = V \quad\text{and}\quad
  D^2_n\big(\langle V,n\rangle\,\langle W,n\rangle\big)
  = V\,W^T + W\,V^T,
\]
we see that at $n=0$, the gradient of $B^i$ vanishes,
$\nabla_nB^i(0)=0$, and the Hessian of $B^i$ is the symmetric matrix
\[
  D^2_nB^i(0) = \frac{\mu^i\,r}{\gamma_{\varrho}\,
    \widehat{\gamma}_{\ssst{T}}\,\bar n^i}\,\bigg(
    \frac{\gamma^i}{\gamma_p} (A\Theta^T + \Theta A^T)
    + A(Q-\Theta)^T + (Q-\Theta)A^T \bigg).
\]
We cannot conclude that $B^i$ is convex as the matrix $D^2_nB^i(0)$
is not positive definite, but because all the entries are positive, we
can conclude that the directional derivative is increasing in any
direction in the positive orthant $\{ n^k \ge 0 \}$, which indicates
cooperative behavior.

\par\medskip

\noindent{\bf Special Case.}\quad
Now, we consider the special case when $\nu^{ij}=0$ for $i \neq j$; in
this case, there is no competition for cellobiose that is not
intentionally shared.  In this special case, \eqref{Bi(n)} becomes
\begin{align*}
  B^i(n) &= \frac{\mu^i\,r}{\gamma_{\varrho}}\,
    \frac{\langle A,n\rangle}{\tau(n)}\,
    \bigg(\frac{\gamma^i\,\langle\Theta,n\rangle}
    {(\bar n^i + \frac{1}
{\gamma^i}\,\langle\Gamma,n\rangle)\,(\langle\Gamma,n\rangle + \gamma_p)}
  + \frac{(1-\theta_p^i)\,\widehat{q}^i\, {k}_2^i\;n^i}
            {\bar n^i + n^i}\bigg)\\
  &=: B^i_1(n) + B^i_2(n).
\end{align*}

As in the single-trait case, we again see an indication that for small
populations, more sharing (represented by the coefficient vector
$\Theta$) would be beneficial for the $i$-th population provided
$\gamma^i>\gamma_p$, because it increases the derivative
$\nabla_nB^i(n)$: this can be seen by differentiating with respect to
the vector parameter $\Theta$.  Recall that $\gamma^i>\gamma_p$ means
that cellobiose is consumed (by $n^i$) faster than it decays.

\begin{lemma}\label{coopmult}
Suppose that $\nu^{ij}=0$ for $i \neq j$. Let $i\in \{1,\dots,M\}$ be fixed and let
\[
  n_0 = (n_0^1,n_0^2,\dots,n_0^{i-1},0,n_0^{i+1},\dots, n_0^M)
  \in\RR^M \quad \mbox{with} \quad n_0^j\ge
 0.
\]
Then
\begin{equation}\label{DB2isc}
\frac{\del B_2^i }{\del n^i}(n_0) =
  \frac{\mu^i\,r}{\gamma_{\varrho}}\,
    \frac{\langle A,n_0\rangle}{\tau(n_0)}\,
    \frac{(1-\theta_p^i)\,\widehat{q}^i\,{k}_2^i}{\bar n^i} > 0.
\end{equation}
Furthermore, suppose $\min_j\bar{\alpha}^j>0$ and $\min_j\gamma^j >
0$. Then there exists $\eps >0$ such that for all $\max_j\theta_p^j
<\eps$, we have
\[
\frac{\del B^i}{\del n^i}(n_0) > 0 \quad \mbox{for all $n_0\ne 0$}\,.
\]
\end{lemma}

\par

\begin{proof}[Idea of proof]
Equation \eqref{DB2isc} follows immediately by differentiation.  When
we differentiate $B^i_1$, we introduce negative terms each time the
derivative falls on a denominator.  However, each such term introduces
a higher power in the denominator, so each of those terms can be
represented as a product of $\langle A,n_0\rangle/\tau(n_0)$ with
terms uniformly bounded in $n_0$.  Comparing these to \eqref{DB2isc},
it follows that by choosing $\max_j\theta_p^j <\eps$ with $\eps$ small
enough, the sum will be positive.
\end{proof}

\cmt{
\begin{remark}\rm
The key conclusion in Lemma \ref{coopmult} is that the birth rate $B^i(n)$ includes some form of {\em cooperation}. Namely, the cooperative effect is not eliminated  in the multiple trait population $T$-model provided that $\nu_{ij}=0$ for $i \neq j$. In other words, when interspecies competition for `ready to be digested' resource  $p$  is not involved then there is an Allee effect for each species. Compare it for instance with simple logistic terms like $r-n_0^i$ which decreases with $n_0^i$. In contrast $B^i(n)$ actually penalizes populations which are too small (and populations which are too large of course just like a logistic term). Finally, even if the conditions of Lemma 4.1 do not hold a cooperative effect still maybe present depending on the differential $D B(0)$. 
\end{remark}
}

\subsection{Cooperative interactions in $N$-$S$-model} \label{coopNS}

We now consider cooperation in the $N$-$S$-model as we did for the
simpler models.  We are interested in the situation that the
production of enzymes, consumption and creation of landing sites
occurs much faster than changes in the population of the
microorganism.  Thus we again assume that equations \eqref{enz1-NS},
\eqref{enz2-NS}, \eqref{Neqn-NS} and \eqref{PNeqn-NS}$_1$ are at
equilibrium, and the dynamics is driven by the population change
\eqref{PNeqn-NS}$_2$.  We also assume that the length of the cellulose
chains does not exceed a given number $L>0$.  Then we obtain the
system
\begin{equation}\label{NS-fast}
\left\{\begin{aligned}
 0  &= {b}_1 \SP n(t) - d_1 {e}_1(t)\\[2pt]
 0  & = {b}_2 \SP n(t)  - \sum_{l,i} \bigg[\beta^{l,i}  \Big( i N^{l,i}(t) - {{e}_{2A}^{l,i}(t)}\Big) {e}_{2D}(t) - \sigma^{l,i} \SP {e}_{2A}^{l,i}(t)\bigg] - d_{2D} \SP {e}_{2D}(t)\\
0  & =    \beta^{l,i}\Big( i N^{l,i}(t) - {{e}^{l,i}_{2A}(t)}\Big) {e}_{2D}(t)  - \Big(\sigma^{l,i} + d_{2A}^{\,l,i} + \gamma_{r}^{ l,i}\Big) {e}_{2A}^{l,i}(t)\\[3pt]
0  &= r^{l,i}  +
\Big(\widehat{\alpha}^{l,i-1}N^{l,i-1}(t)-\widehat{\alpha}^{l,i}
N^{l,i}(t)\Big){e}_1(t)
 + \Big(\widehat{\gamma}^{l,i+1} N^{l,i+1}(t) - \widehat{\gamma}^{l,i}
 N^{l,i}(t)\Big) \\ & \quad
+ \Big( {q}^{l+1,i}\SP {e}_{2A}^{l+1,i}(t) -
{q}^{l,i}\SP {e}_{2A}^{l,i}(t)\Big)
+\cmt{\delta_{li}q^{l+1,l+1}e_{2A}^{l+1,l+1}}\\
&\quad +\Big( \widehat{\theta}^{l,i+1}\SP {e}_{2A}^{l,i+1}(t) -
\widehat{\theta}^{l,i}\SP {e}_{2A}^{l,i}(t)\Big)
 - \gamma_{\varrho}^{l,i}N^{l,i}(t)
\\
0 &= \theta_{p} \, \sum_{l,i} {q}^{l,i} \SP {e}_{2A}^{l,i}(t) - \gamma n(t)p(t)-\gamma_p p(t)\\
\del_t n(t) &=  \frac{\mu n(t)}{\bar{n}+n(t)} \Big( \gamma  +
\frac{1-\theta_p}{\theta_p}(\gamma n(t) + \gamma_p) \Big) p(t)
  -\gamma_n n(t)\,,
\end{aligned}\right.
\end{equation}
for $(l,i) \in I_L$, where $\delta_{li}$ is the Kronecker delta, the
rates $r^{l,i}=0$ when $i \neq 0$ and we use the convention
\eqref{convind}; here we have used \eqref{NS-fast}$_5$ to simplify
\eqref{NS-fast}$_6$.

\par

To describe the dynamics, it is sufficient to express $p$ in terms of
$n$, which will in turn provide a scalar autonomous differential
equation for $n(t)$.  For small populations the
equations \eqref{NS-fast}$_{1-5}$ can be solved uniquely in terms of
$n$, yielding the following theorem.

\begin{theorem}\label{eqsolthm}
  There are $m,\bar{m}>0$ and $C^{\infty}$ functions
\begin{equation}
\label{NSsoln}
\widehat{e}_1(n), \quad \widehat{e}_{2D}(n), \quad  \widehat{e}_{2A}^{l,i}(n), \quad \widehat{N}^{l,i}(n), \quad \widehat{p}(n) : (-m,\bar{m}) \to \RR
\end{equation}
such that:
\begin{itemize}
\item [$(i)$] For each $n \in (-m,\bar{m})$ the equations
  \eqref{NS-fast}$_{1-5}$ can be solved uniquely for $e_1$, $e_{2D}$,
  $e_{2A}^{l,i}$, $N$, $p$ in terms of $n$,
\begin{equation*}
  e_1=\widehat{e}_1(n), \quad e_{2D}=\widehat{e}_{2D}(n), \quad
  e_{2A}^{l,i}=\widehat{e}_{2A}^{l,i}(n), \quad
  N^{l,i}=\widehat{N}^{l,i}(n), \quad  p=\widehat{p}(n).
\end{equation*}

\item [$(ii)$] The functions from \eqref{NSsoln} are given to leading
  order as
\begin{equation}
\label{soln1}
  \widehat{N}^{l,i}(n) = \nu^{l,i} \SP n^i + O(n^{i+1})\,,
\end{equation}
with
\begin{equation*}
  \nu^{l,0}=\frac{r^{l,0}}{\gamma_{\varrho}^{l,0}}\quad \mbox{and} \quad \nu^{l,i}= \nu^{l,i-1} \SP \frac{\widehat{\alpha}^{l,i-1}}{\widehat{\gamma}^{l,i}+\gamma_{\varrho}^{l,i}} \frac{{b}_1}{d_1} \,, \quad i \geq 1\,,
\end{equation*}
and with

\begin{align}
\widehat{e}_1(n) &= \frac{{b}_1}{d_1}n, \qquad \widehat{e}_{2D}(n) =
\frac{{b}_2}{d_{2D}}n+O(n^2)  \nonumber\\\nonumber
\widehat{e}_{2A}^{l,i}(n) &=
i\bigg(\frac{  {b}_2  \SP \beta^{l,i} \SP \nu^{l,i} \,
}{d_{2D}(\sigma^{l,i} + d_{2A}^{\,l,i} + \gamma_{r}^{ l,i})}
+O(n)\bigg)n^{i+1} \\[5pt]
\frac{\gamma_p}{\theta_p} \SP \widehat{p}(n)
&= \bar{p}(n) \SP n^2, \quad \mbox{where} \quad
\bar{p}(n)=\frac{{b}_2}{d_{2D}} \sum_{l} q^{l,1} \frac{  \beta^{l,1} \SP
  \nu^{l,1} \,   }{(\sigma^{l,1} + d_{2A}^{\,l,1} + \gamma_{r}^{
    l,1})}  + O(n)\,.
\label{solnp}
\end{align}
\end{itemize}
\end{theorem}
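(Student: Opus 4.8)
The plan is to read the steady-state constraints \eqref{NS-fast}$_{1-5}$ as a single equation $F(Y;n)=0$, where $Y=\bigl(e_1,\,e_{21},\,(e_{22}^{l,i}),\,(N_{l,i}),\,p\bigr)$ ranges over a finite-dimensional space --- the indices $(l,i)$ run over $I_L$, and the convention \eqref{convind} disposes of boundary terms --- and $F$ is a polynomial map, and then to apply the implicit function theorem at the solution of $F(Y;0)=0$. That solution $Y_0$ is read off directly: at $n=0$, \eqref{NS-fast}$_1$ forces $e_1=0$, then \eqref{NS-fast}$_2$ forces $e_{21}=0$ and \eqref{NS-fast}$_3$ forces $e_{22}^{l,i}=0$; with $e_1=e_{22}^{l,i}=0$, equation \eqref{NS-fast}$_4$ collapses for each $l$ to the triangular recursion $(\widehat{\gamma}_{l,i}+\gamma_{\varrho,l,i})\,N_{l,i}=r_{l,i}+\widehat{\gamma}_{l,i+1}\,N_{l,i+1}$, which --- solved from $i=L$ downward, using $r_{l,i}=0$ for $i\ge1$ and $\widehat{\gamma}_{l,0}=0$ --- gives $N_{l,i}=0$ for $i\ge1$ and $N_{l,0}=r_{l,0}/\gamma_{\varrho,l,0}=\nu_{l,0}$; finally \eqref{NS-fast}$_5$ gives $p=0$.

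The heart of part $(i)$ is to check that the partial Jacobian $D_YF(Y_0;0)$ is invertible, and I would do this by exploiting the sparsity at $Y_0$. Since \eqref{NS-fast}$_1$ involves only $e_1$ (its $e_1$-derivative being $-d_1$), expanding the determinant along that row removes $e_1$. In the remaining minor the $p$-column is met only by \eqref{NS-fast}$_5$ (entry $-\gamma_p$) and, at $Y_0$, the $e_{21}$-column only by \eqref{NS-fast}$_2$ (entry $-d_{21}$), because the $e_{21}$-derivative of \eqref{NS-fast}$_3$ equals $\beta_{l,i}\bigl(iN_{l,i}-e_{22}^{l,i}/m_2\bigr)$, which vanishes at $Y_0$. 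Expanding along those two columns leaves the Jacobian of \eqref{NS-fast}$_3$ and \eqref{NS-fast}$_4$ in the variables $(e_{22}^{l,i},N_{l,i})$. At $Y_0$ the $N$-derivatives of \eqref{NS-fast}$_3$, equal to $\beta_{l,i}\,i\,e_{21}$, also vanish, so this block is block-lower-triangular for the splitting $(e_{22},N)$: its $e_{22}$-diagonal is $-(\sigma_{l,i}+d_{22}^{l,i}+\gamma_{r,l,i})\ne0$, while its $N$-part --- which at $Y_0$ couples $N_{l,i}$ only to $N_{l,i+1}$, since the $N_{l,i-1}$-derivative of \eqref{NS-fast}$_4$ is $\widehat{\alpha}_{l,i-1}e_1=0$ there --- is, for each fixed $l$, upper bidiagonal in $i$ with nonzero diagonal $-(\widehat{\gamma}_{l,i}+\gamma_{\varrho,l,i})$. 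Hence $\det D_YF(Y_0;0)\ne0$, and the implicit function theorem yields the $C^\infty$ functions \eqref{NSsoln}, unique near $Y_0$ and defined on some interval $(-m,\bar m)$; this is part $(i)$.

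For part $(ii)$ I would expand each of the functions \eqref{NSsoln} in its Taylor polynomial at $n=0$; implicit differentiation of \eqref{NS-fast}$_{1-5}$ then determines the Taylor coefficients recursively, each order being a linear system with matrix $D_YF(Y_0;0)$. Equation \eqref{NS-fast}$_1$ gives $\widehat{e}_1(n)=(b_1/d_1)\,n$ identically. From \eqref{NS-fast}$_3$ each $\widehat{e}_{22}^{l,i}$ is an explicit ratio whose numerator carries the factor $\widehat{N}_{l,i}\,\widehat{e}_{21}$ and whose denominator stays away from $0$ near $n=0$; since $\widehat{N}_{l,i}(0)=\widehat{e}_{21}(0)=0$ for $i\ge1$, this gives $\widehat{e}_{22}^{l,i}(n)=O(n^2)$, whence \eqref{NS-fast}$_2$ at first order gives $\widehat{e}_{21}(n)=(b_2/d_{21})\,n+O(n^2)$. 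For the $\widehat{N}_{l,i}$, a finite induction using \eqref{NS-fast}$_4$ (processing the chains $l$ from $L$ downward, so that the feedback of chain $l+1$ into chain $l$ enters only through the already-controlled $\widehat{e}_{22}^{l+1,i}$) shows that the lowest-order term of $\widehat{N}_{l,i}$ is produced solely by $\widehat{\alpha}_{l,i-1}\,\widehat{e}_1\,\widehat{N}_{l,i-1}$, with $\widehat{N}_{l,0}(n)=\nu_{l,0}+O(n)$ coming from \eqref{NS-fast}$_4$ at $i=0$; this yields $\widehat{N}_{l,i}(n)=\nu_{l,i}\,n^i+O(n^{i+1})$ with the recursion for $\nu_{l,i}$ stated in the theorem. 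Reinserting these expansions into \eqref{NS-fast}$_3$ and \eqref{NS-fast}$_5$ gives the stated leading terms for $\widehat{e}_{22}^{l,i}$ and $\widehat{p}$.

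I expect the main obstacle to be two pieces of bookkeeping rather than anything deep: (a) pinning down exactly which partials of $F$ vanish at $Y_0$, so that the (block) triangular structure of $D_YF(Y_0;0)$ becomes visible --- the key being that $e_1=e_{21}=0$ and $N_{l,i}=0$ for $i\ge1$ at $Y_0$, which kills all the ``off-diagonal'' couplings coming from products of unknowns --- and (b) organizing the induction for the $\widehat{N}_{l,i}$ so that every uncontrolled coupling ($N_{l,i+1}$, the $e_{22}$'s, the feedback from chain $l+1$) is seen to enter only at strictly higher order in $n$. Everything else is the standard implicit function theorem together with Taylor expansion of the resulting smooth branch.
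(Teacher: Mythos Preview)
Your proposal is correct and takes essentially the same route as the paper: implicit function theorem at $n=0$ followed by an inductive Taylor expansion to extract the leading orders. The only organizational difference is that the paper's sketch first eliminates $e_1,\,e_{21},\,e_{22}^{l,i},\,p$ from \eqref{NS-fast}$_{1,2,3,5}$ in terms of $n$ and $N_{l,i}$ and then applies the implicit function theorem to \eqref{NS-fast}$_4$ alone, whereas you apply it once to the full system and recover the same reduction through the block-triangular structure of $D_YF(Y_0;0)$---the two are equivalent, and your explicit Jacobian check makes the invertibility step more transparent.
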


\cmt{
\begin{proof}
\def\l#1{\eqref{NS-fast}$_#1$}
For fixed $t$, we regard \l1-\l5 as an algebraic system, which can be
solved uniformly in $t$, for small $n$.  First, \l1 yields
\[
  e_1 = k_1\,n,
\quad\text{with}\quad
  k_1 = \frac{d_1}{b_1},
\]
and subtracting all of \l3 from \l2 and solving gives
\begin{equation}\label{e2D}
  e_{2D} = \frac{b_2}{d_{2D}}\,n -
  \sum_{l,i}\frac{d_{2A}^{\,l,i} + \gamma_{r}^{ l,i}}{d_{2D}}
        \,e_{2A}^{l,i}.
\end{equation}
Next, \l5 yields
\[
  p = \frac{\theta_{p}}{\gamma n + \gamma_p}\,
       \sum_{l,i} {q}^{l,i} \, {e}_{2A}^{l,i},
\]
while \l3 gives
\begin{equation}\label{e2A}
  {e}_{2A}^{l,i} = i\, N^{l,i}\,
\bigg(1 + \frac{\sigma^{l,i} + d_{2A}^{\,l,i} + \gamma_{r}^{ l,i}}
              {\beta^{l,i}\,{e}_{2D}}\bigg)^{-1}
= \eta^{l,i}\,N^{l,i},
\end{equation}
where we have set
\[
\begin{aligned}
  \eta^{l,i} &= i\,\Big(1 - \frac 1{1+\zeta^{l,i}\,{e}_{2D}}\Big)
    = i\,\zeta^{l,i}\,{e}_{2D}\,\big(1+O(e_{2D})\big),
\quad\text{with}\\
  \zeta^{l,i} &= \frac{\beta^{l,i}}
             {\sigma^{l,i} + d_{2A}^{\,l,i} + \gamma_{r}^{ l,i}}.
\end{aligned}
\]
Now, regarding $n$ and $e_{2D}$ as fixed, we use \eqref{e2A} in \l4 to
get 
\begin{equation}\label{Nsys}
\begin{aligned}
  0  &= r^{l,i}  +
\Big(\widehat{\alpha}^{l,i-1}N^{l,i-1}-\widehat{\alpha}^{l,i}
N^{l,i}\Big)\,k_1\,n
 + \Big(\widehat{\gamma}^{l,i+1} N^{l,i+1} - \widehat{\gamma}^{l,i}
 N^{l,i}\Big) \\ & \quad
+ \Big({q}^{l+1,i}\,\eta^{l+1,i}\,N^{l+1,i} -
{q}^{l,i}\,\eta^{l,i}\,N^{l,i}\Big)
+\cmt{\delta_{l,i}\,q^{l+1,l+1}\,\eta^{l+1,l+1}\,N^{l+1,l+1}}\\
 & \quad
+\Big( \widehat{\theta}^{l,i+1}\,\eta^{l,i+1}\,N^{l,i+1} -
\widehat{\theta}^{l,i}\,\eta^{l+1,i}\,N^{l,i}\Big)
- \gamma_{\varrho}^{l,i}N^{l,i},
\end{aligned}
\end{equation}
which we regard as a linear system,
\[
   A\,\mathbf{N} = \mathbf{r}, \quad\text{for}\quad
  \mathbf{N}^T = \big(N^{1,0},\ N^{1,1},\ N^{2,0},\
         N^{2,1},\ N^{2,2},\ N^{3,0},\dots N^{L,L} \big).
\]
When expressed in matrix form, the matrix $A$ is sparse and upper
Hessenberg, with subdiagonal entries $-\widehat{\alpha}^{l,i-1}k_1n$.
It follows that the matrix is upper triangular for $n=0$, so
invertible for small $n$, and we get a unique solution
$N^{l,i}=N^{l,i}(n,e_{2D})$.

Setting $n=e_{2D}=0$, and recalling that $r^{l,i}=0$ for $i>0$ and
$\widehat\gamma^{l,0}=0$, \eqref{Nsys} gives
the initial solution
\[
  N^{l,0}(0,0) = \frac{r^{l,0}}{\gamma_\varrho^{l,0}}, \qquad
  N^{l,i}(0,0) = 0, \ i>0,
\]
and so using \eqref{e2A} and \eqref{e2D}, we get in particular
$e_{2A}^{l,i} = \eta^{l,i} = 0$ whenever $n=e_{2D}=0$.

We now plug the solution $N^{l,i}=N^{l,i}(n,e_{2D})$ into \eqref{e2A},
\eqref{e2D} to get
\[
  G(n,e_{2D}) := e_{2D} - \frac{b_2}{d_{2D}}\,n +
  \sum_{l,i}\frac{d_{2A}^{\,l,i} + \gamma_{r}^{ l,i}}{d_{2D}}
   \,\eta^{l,i}\,N^{l,i}(n,e_{2D})
  = 0,
\]
relating $e_{2D}$ to $n$.  We calculate
\[
  \frac{\del G}{\del e_{2D}}\Big|_{(0,0)} = 1,
\]
so the implicit function theorem implies that, for $n$ small enough,
there is a unique function $e_{2D}(n)$ such that $G(n,e_{2D}(n))=0$,
and moreover,
\[
  e_{2D} = \frac{b_2}{d_{2D}}\,n + O(n^2).
\]

Finally, we have
\[
  N^{l,i}(n) = N^{l,i}(0,0) + O(n) \quad\text{and}\quad
  \eta^{l,i} = i\,\frac{b_2\,\zeta^{l,i}}{d_{2D}}\,n + O(n^2),
\]
so according to \eqref{e2A}, we have 
\[
  e_{2A}^{l,i} = O(n^2), \quad\text{so also}\quad
  p = O(n^2).
\]
Moreover, for small $n$, we can write \eqref{Nsys} for $i\ge 1$ as
\[
\begin{aligned}
  0  =& \widehat{\alpha}^{l,i-1}\,k_1\,n\,N^{l,i-1} -
   \big(\widehat{\gamma}^{l,i} + \gamma_{\varrho}^{l,i}
    + O(n)\big)\,N^{l,i}\\
  &\quad + \big(\widehat{\gamma}^{l,i+1} +O(n)\big)\,N^{l,i+1}
  +\cmt{\delta_{l,i}\,q^{l+1,l+1}\,\eta^{l+1,l+1}\,N^{l+1,l+1}},
\end{aligned}
\]
and we can solve this inductively in $i$, to get
\[
  N^{l,i} = \frac{\widehat{\alpha}^{l,i-1}\,k_1}
         {\widehat{\gamma}^{l,i} + \gamma_{\varrho}^{l,i}}\,N^{l,i-1}\,
         n\,\big(1+O(n)\big),
\]
from which the result follows.
\end{proof}}

\medskip

\noindent {\bf Birth rate $B[n]$.}\quad
By Theorem \ref{eqsolthm}, and using \eqref{NS-fast}$_6$ and
\eqref{solnp}, we conclude that for small populations
$n\in[0,\bar{m})$, the dynamics is again driven by the equation
\begin{equation*}
\del_t n(t) =  n(t)\big( B(n)-\gamma_n \big),
\end{equation*}
where now the birth rate $B(n)$ is given by
\begin{equation}\label{NSbirth}
\begin{aligned}
B(n) &= \frac{\mu}{\bar{n}+n(t)} \Big( \gamma  +
  \frac{\theta_p-1}{\theta_p} (\gamma n + \gamma_p) \Big) \widehat{p}(n)\\[6pt]
 &= \frac{\mu\,n^2}{\bar{n}+n(t)} \Big( \frac{\gamma}{\gamma_p}\theta_p  + \big(1-\theta_p\big) (\frac{\gamma}{\gamma_p} n + 1) \Big) \bar{p}(n)\,.
\end{aligned}
\end{equation}

\par

We now divide $B(n)$ by $n^2$ and differentiate with respect
to $\theta_p$.  Since $\bar{p}$ is independent of $\theta_p$, we
obtain
\[
  \frac{\del }{\del \theta_p}\bigg(\frac{B(n)}{n^2}\bigg)\bigg|_{n=0} =
  \frac{\mu}{\bar{n}}\, \Big( \frac{\gamma}{\gamma_p} - 1 \Big)\,
  \frac{{b}_2}{d_{2D}} \SP m_c\sum_{l} q^{l,1} \frac{  \beta^{l,1} \SP \nu^{l,1}
    \,   }{(\sigma^{l,1} + d_{2A}^{\,l,1} + \gamma_{r}^{ l,1})},
\]
so that
\[
  \frac{\del }{\del \theta_p}\bigg(\frac{B(n)}{n^2}\bigg)\bigg|_{n=0}
   >0   \quad \mbox{if and only if} \quad   \gamma>\gamma_p\,.
\]
Thus we arrive at a similar conclusion to that of the $T$-model: that
is, for small populations, sharing food (within the species) is
beneficial in terms of growth as long as the consumption rate $\gamma$
is greater than the decay rate $\gamma_p$ of the cleaved off
cellobiose.

\section{A Model in the Continuous Setting}
\label{sec:conttrait}

{In this section, by analogy with our multiple trait $T$-model
\eqref{NEQM-2}, we develop a model for a population with any number of traits, which we write for convenience in the continuous setting.  We should say again that in our context, we do not expect the number of possible enzymes for cellulose degradation to be that large but note that this model includes
finite trait models by appropriate use of $\delta$-functions,
\[
  n(t,x)=\sum_j n_j(t)\,\delta_{x_j}(x).
\]
We hence present the continuous model here for its generality and because the resulting
equation may be more amenable to analysis.}

\par

We think of the multiple-trait population as having $M$
traits indexed by $x_1 < \dots < x_M$, so we write
\[
  n^i(t)=n(x^i,t) \Delta x, \quad
  e_1^i(t)=e_1(x^i,t) , \quad
  e_2^i(t)=e_2(x^i,t),
\]
for some functions $n(x,t)$, $e_1(x,t)$, $e_2(x,t)$, representing the
population and enzyme densities.  We now simply assume that the
variable $x$ takes on a continuous range of values.

We similarly translate the coefficient vectors \eqref{coeffvec}, so
that these become continuous parameters: that is, we allow the
parameters $b_i$, $d_i$, $\alpha$, $\beta$, $\widehat{q}$, $\theta_p$,
etc, to depend on $x$, and in analogy to \eqref{coeffvec}$_{1,2,3}$ we
set
\begin{equation}
  \label{ctscoeff}
\begin{gathered}
  A(x) = \alpha(x) \,\frac{{b}_1(x)}{d_1(x)}, \quad
  B(x) = \theta_r(x)\, {\beta}(x)\, \frac{{b}_2(x)}{d_2(x)} , \quad
  Q(x) = \widehat{q}(x)\,\frac{{b}_2(x)}{d_2(x)}, \\[3pt]
 \end{gathered}
\end{equation}
where these are now positive functions.  In particular, we interpret $\alpha$, $ \beta$ and $\widehat{q}$ as the rates of landing site generation, occupation, and the rate of cellobiose production, per individual, respectively.

We now simply follow the development that led to \eqref{NEQM-2}, but
reinterpreting the inner product, so that for each function $W(x)$,
\[
  \langle W,n\rangle = \int W(y)\,n(y,t)\;dy.
\]
Then \eqref{varrhop}, \eqref{Tsol} and \eqref{taudef} are unchanged.
To express the population equation, we define the functional
\[
  \tau[n] := \frac{1}{\gamma_{\varrho}m_c}\langle A,n\rangle \langle
  Q,n\rangle  + \langle A,n\rangle +\langle B,n\rangle  + \gamma_p,
\]
and the convolution
\[
  N[n](x,t) = \int \nu(s,x)\,n(s,t)\;ds,
\]
which is an inner product in the first variable.

We must now model the last term in \eqref{Bi(n)}.  In analogy with
that term, we define
\[
  \Xi(z;x,t) = \frac{\nu(x,z) Q(z)}
     {\nu(x,z)\,\bar n(x,t) +  N[n](z,t)}\,.
\]

Now, in analogy with \eqref{NEQM-2}, \eqref{Bi(n)}, we write the
population equation as
\[
  \del_t n(x,t)  = n(x,t) \, \Big( B[n](x,t) -\gamma_n(x) \Big),
\]
where the birth rate is now the functional

\begin{equation}\label{Bcont}
  B[n] = \frac{\mu(x)\,r}{\gamma_{\varrho}\, m_c}\,
   \frac{\langle A,n\rangle}{\tau[n]}\,
    \bigg( \frac{\langle \theta_p \, Q, n\rangle \SP \gamma(x)}  {\big(\bar n(x) + \frac{1}{\gamma(x)}\,\langle \gamma ,n \rangle \big)
  \,\big(\langle \gamma ,n\rangle + \gamma_p\big)}
+ \Big\langle (1-\theta_p) \, \Xi(\cdot \SP ;x,t) \,, n\Big\rangle \bigg)\,.
\end{equation}

\section{Numerical experiments}\label{sec:num_exp}

\cmt{

In this section we use numerical experiments to test and compare the various models that we have derived.

\par\smallskip

\noindent{\it $N$-$S$ and $S$ models.} First, we compare the $N$-$S$-model with the $S$-model. The results of the numerical computations are presented in Figure \ref{comp1}. The coefficients of the $S$-system are chosen as 
\begin{equation}
\begin{aligned}
& b_1 = 0.5,& &b_2 = 0.5,& &d_1 =  0.5,& &d_{2A} = 0.5,& &d_{2D}=0.5&\\
&\beta = 0.5,& &\sigma = 0.1,& &\gamma_r   = 0.01,& &\gamma_p=0.001,& &\gamma_{\varrho} = 0.001&\\
&\gamma_n = 0.1,& &\alpha = 0.05,& &r = 1000,& &\theta_r = 0.05,& &\theta_p = 0.75&\\
& q = 1,& &\gamma = 0.005,& &\mu = 0.5,& &\bar{n} = 100.& &&
\end{aligned}
\end{equation}
The coefficients of the $N$-$S$-system are chosen randomly as follows. For each coefficient of the $S$-system, let us call it `c', the corresponding coefficient $c^{l,i}$ of the $N$-$S$-system (which explicitly depends on the state of the chain $(l,i)$) is chosen as 
\begin{equation}
c^{l,i}=c X \quad \text{where} \quad X\sim\text{gamma}(k,\theta), \quad k=p^{-2}, \theta=p^2\,.
\end{equation}
Here the value $p$ is the standard deviation of $X$. Thus all samples approximately lie  in $(c-3p,c+3p)$.

\begin{figure}[H]

\centering

\begin{minipage}{0.31\textwidth}
\includegraphics[width=\linewidth]{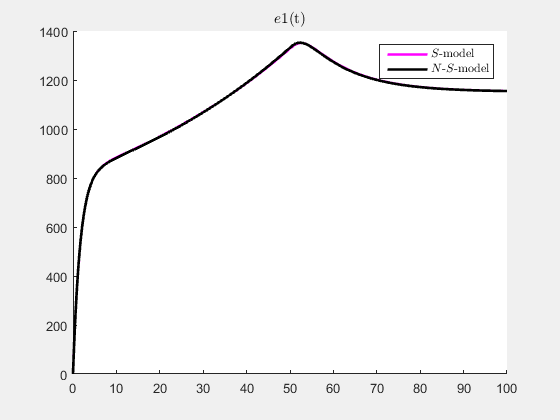}
\end{minipage}
\hspace{3mm} 
\begin{minipage}{0.31\textwidth}
\includegraphics[width=\linewidth]{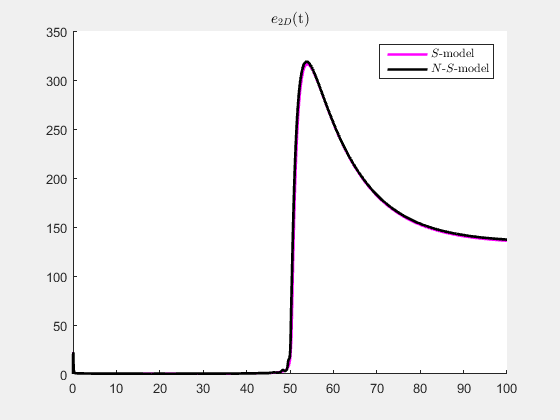}
\end{minipage}

\smallskip
\begin{minipage}{0.31\textwidth}
\includegraphics[width=\linewidth]{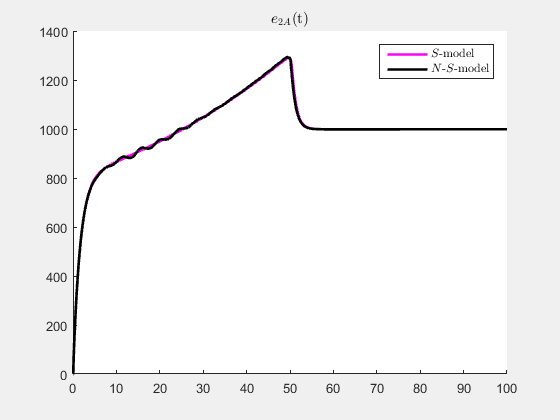}
\end{minipage}
\hspace{3mm} 
\begin{minipage}{0.31\textwidth}
\includegraphics[width=\linewidth]{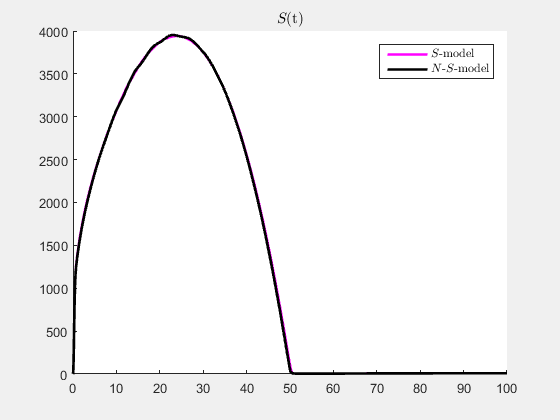}
\end{minipage}

\smallskip
\begin{minipage}{0.31\textwidth}
\includegraphics[width=\linewidth]{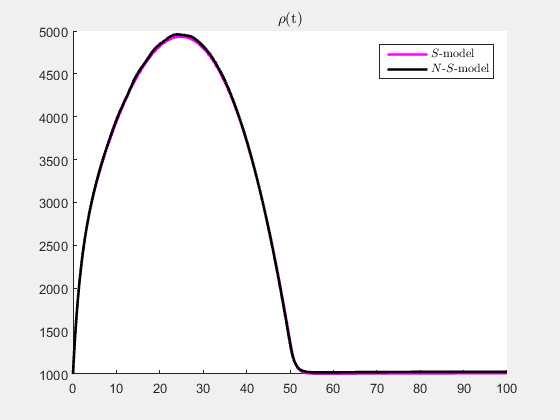}
\end{minipage}
\hspace*{\fill}
\begin{minipage}{0.31\textwidth}
\includegraphics[width=\linewidth]{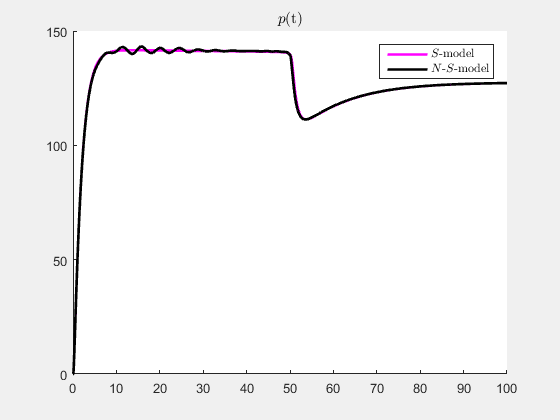}
\end{minipage}
\hspace*{\fill}
\begin{minipage}{0.31\textwidth}
\includegraphics[width=\linewidth]{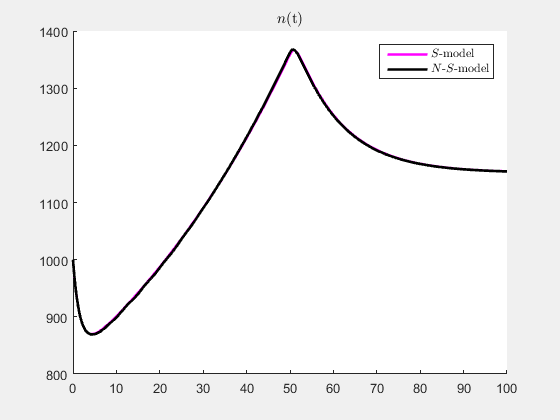}
\end{minipage}
\caption{Solutions of S and NS systems with $3p=0.05$}\label{comp1}
\end{figure}
\par\smallskip

\noindent{\it $S$ and $T$ models.}
We next compare the $T$ and $S$ systems numerically. The coefficients of the $S$-system are chosen as 
\begin{equation}
\begin{aligned}
&b_1=0.1,& &b_2 =  0.1,& &d_1 =  0.1,& &d_{2A} = 0.1,& &d_{2D}=0.1&\\ 
&\gamma_r=0.01,& &\gamma_p=0.005,& &\gamma_{\varrho} = 0.005,& &\gamma_n = 0.01,& &\alpha = 0.05&\\
&r = 1000,& &\theta_r = 0.05,& &\theta_p = 0.75,& &\gamma = 0.01,&  &\mu = 0.5,&\\
&\bar{n} = 100&
\end{aligned}
\end{equation}
and 
\begin{equation}
\begin{aligned}
& \{\beta\}_{i=1}^6 =  \{0.6668, 0.8394, 1.0567,  1.3304, 1.6748, 2.1085\}&\\
& \{\sigma\}_{i=1}^6 = \{17.7828, 177.828, 1778.28, 17782.8, 177828, 1778280\}\\ 
& \{q\}_{i=1}^6 = \{0.0007, 0.0053, 0.0421, 0.3342, 2.6544, 21.0848\}. 
\end{aligned}
\end{equation}
The pictures on Figure \ref{comp2} correspond to the limiting procedure where
\[
q_i \frac{\beta_i}{\sigma_i} = q_0=0.000025 \quad\text{and} \quad \frac{\beta_i}{\sigma_i} \to 0.
\]
}


\begin{figure}[H]
\begin{subfigure}[b]{0.32\textwidth}
  \begin{center}
    \includegraphics[width=1.0\textwidth, height=\textwidth]{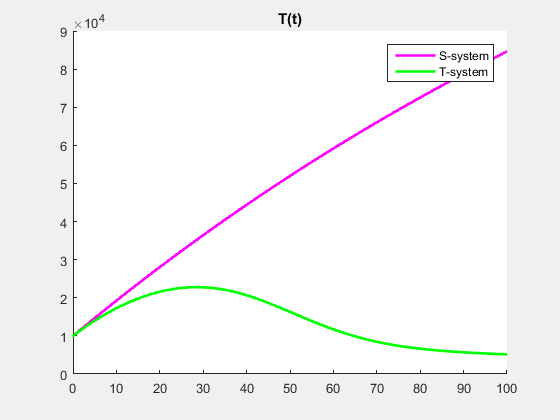}
  \end{center}
   \caption{$i=1, \frac{\beta_i}{\sigma_i}=0.11857$}
\end{subfigure}
\hfill
\begin{subfigure}[b]{0.32\textwidth}
  \begin{center}
    \includegraphics[width=1.0\textwidth, height=\textwidth]{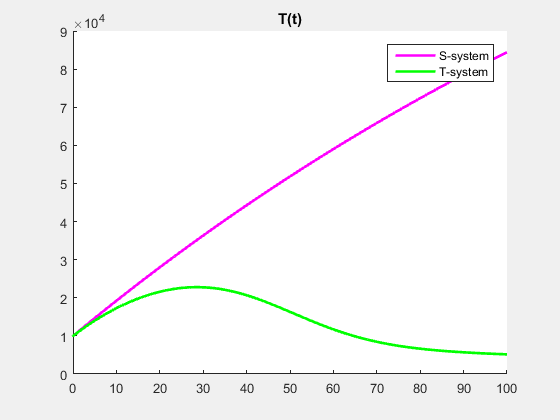}
  \end{center}
  \caption{$i=2, \frac{\beta_i}{\sigma_i}=0.03749$}
\end{subfigure}
\hfill \vspace{5pt}  
\begin{subfigure}[b]{0.32\textwidth}
  \begin{center}
    \includegraphics[width=1.0\textwidth, height=\textwidth]{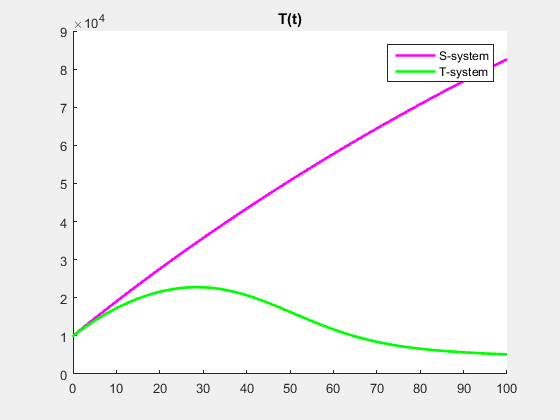}
  \end{center} 
\caption{$i=3,\frac{\beta_i}{\sigma_i}=0.01185$}
\end{subfigure}
\hfill \vspace{5pt}  
\begin{subfigure}[b]{0.32\textwidth}
  \begin{center}
    \includegraphics[width=1.0\textwidth, height=\textwidth]{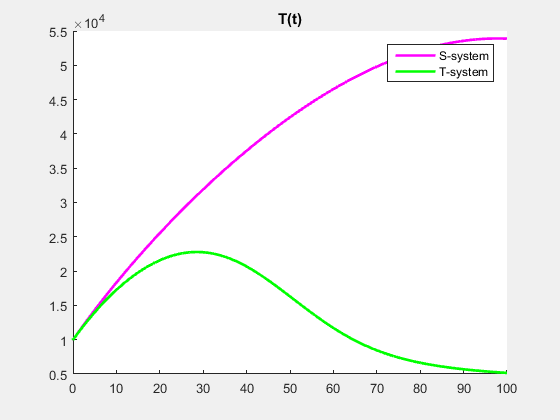}
  \end{center} 
\caption{$i=4,\frac{\beta_i}{\sigma_i}=0.00374$}
\end{subfigure}
\hfill \vspace{5pt}  
\begin{subfigure}[b]{0.32\textwidth}
  \begin{center}
    \includegraphics[width=1.0\textwidth, height=\textwidth]{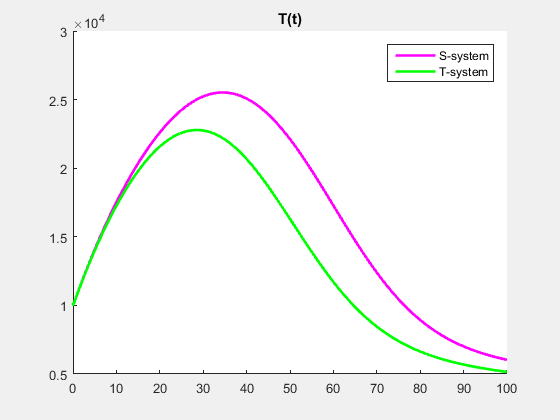}
\end{center}
\caption{$i=5,\frac{\beta_i}{\sigma_i}=0.00118$}
\end{subfigure} \vspace{5pt}  
\hfill
\begin{subfigure}[b]{0.32\textwidth}
  \begin{center}
  \includegraphics[width=1.0\textwidth, height=\textwidth]{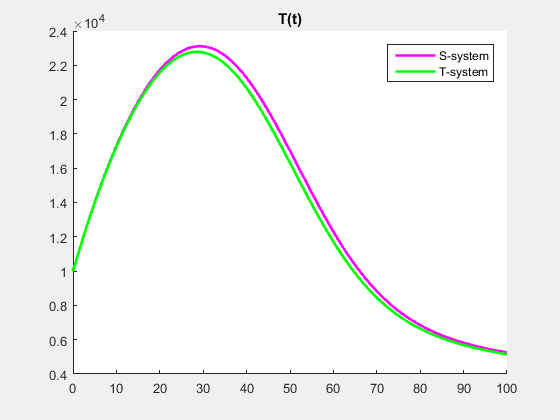}
  \end{center} 
\caption{$i=6,\frac{\beta_i}{\sigma_i}=0.00037$}
\end{subfigure}
\caption{$T(t)$ of $T$-model and $S$-model for $q_i \frac{\beta_i}{\sigma_i} =q_0=0.000025$ and $\frac{\beta_i}{\sigma_i} \to 0$.}\label{comp2}
\end{figure}

\section{Appendix}

\subsection{Tail issue in deterministic selection dynamics}  

Models in population dynamics focus on selection because it is rightfully
viewed as the main mechanism to explain the survival of populations and the evolution of traits.  The selection mechanism in these models is often driven by competition between individuals, possibly combined with mutations to create new traits.  In addition competition is well understood from the modeling point of view.

On the other hand cooperative effects are harder to model, especially
at the level of micro-organisms. Several well-known cooperative
effects (such as sexual reproduction for large animals) do not take
place for all micro-organisms.  Nevertheless, the importance of such
effects has long been recognized: see for instance the works
\cite{Ferriere02, Jones09,Jones12,Holland10} on mutualism that discuss
interspecies interactions yielding reciprocal benefits.

\par

In this paper we introduce biological mechanisms, by the example of cellulose bio-degradation, that lead to reproduction rates encoding both (intra-species) cooperative effects and competition between individuals; see Section \ref{sec:coop}.  This suggests that reproduction rates that only incorporate competition may fail to describe many biochemical processes, especially at the level when $B[n]$ significantly deviates from traditional logistic terms, that is for small populations.

\par

There are several approaches to study the phenotypical evolution driven
by small mutations in replication, the main objective being to
describe the dynamics of the fittest (or dominant) trait in the
population.  The main mechanisms affecting dynamics are usually a) the
selection principle (due to competition, birth and death), and b)
small mutations.  These two mechanisms influence the trait dynamics on
two different scales.  The selection effect becomes evident on the
reproduction timescale $t_R$, while the effect of small mutations is
evident on a generation timescale $t_M \gg t_R$.  The drastic
difference between the two scales introduces both small and large
parameters into models (mutations can be small or rare for instance,
population is usually large and death rates could vary) and this
causes various difficulties.

One of the best known approaches is the so-called adaptive dynamics
theory, see for instance \cite{Dieckmann96, ODieckmann04, Calcina04,
  Ghosal03, Metz96}.  Adaptive dynamics considers evolution as a series
of invasions by a small mutant population of the dominant trait
population, a process which is classically modeled by a system of
ODE's.  Depending on the relative fitness of the mutant, this can lead
to the replacement of the dominant trait or the extinction of the
invading population (the cases of co-existence are usually harder to
handle at this level).

Other very popular models are stochastic, or individual-centered
models, see for instance
\cite{Ferriere02,Champagnat06-1,Champagnat06-1} among many.
Probabilistic models are natural because they take natural
fluctuations of births, deaths, and mutations into account at the
individual level, and are therefore considered to be the most
realistic.  They consist in life and death processes for each
individual $X_i$.  A typical example consists in taking Poisson
processes with birth rates $b(X_i)$ and death rates increasing with
the competition between individuals, for example $d_i = d(X_i)+\sum_{j
  \neq i} I(X_i-X_j).$

When a birth occurs, it simply adds another individual with the same
trait, unless a mutation takes place, generally with small
probability.  In that case, the new individual has a different random
trait, obtained through some distribution.  In general of course
competition could influence both the birth rate and the mortality
rate.  Under the right scalings, stochastic models can lead to the
classical adaptive dynamics \cite{Meleard09, Perthame09}.

When the total number of individuals is too large (it can easily reach
$10^{10}-10^{12}$ for some micro-organisms), stochastic models can
become cumbersome and prohibitive to compute numerically for
instance.  In that case, one expects to be able to derive a
deterministic model as a limit of large populations which would be
simpler to use.  Such a derivation was provided in \cite{Champagnat08}
for example, leading to integro-differential equations such as
\begin{equation}
\label{ctraitmut}
  \del_t n(x,t) = \Big( r(x)-\int I(x-y)n(t,y) \, dy \Big)n(x,t) +M[n](t,x),
\end{equation}
where $r(x)=b(x)-d(x)$ and $M[\cdot]$ is the mutation kernel, a
diffusion or integral operator.  This is the level of modeling that we
are interested in this article.

Even though deterministic models of type \eqref{ctraitmut} are
obtained from stochastic ones, simulations for these two types of
models typically produce different behaviors in terms of evolutionary
speeds and branching patterns.  In stochastic simulations, in which a
single individual represents a minimal unit necessary for survival,
demographic stochasticity (the variability in population growth rates
among individuals) acts drastically on small populations, leading to
complete extinction of small populations with negative reproduction
rates.  In deterministic models however, sub-populations can never go
completely extinct and can ``rebound'' later on if their reproduction
rate becomes positive.

It is an open and difficult question of how to keep the stochastic
effects for the small populations in the deterministic models.
Perthame and Gauduchon \cite{Perthame09} made an attempt in truncating
the populations with less than one individual by introducing an
analog of stochastic mortality for models of type \eqref{ctraitmut}, a
survival threshold, which allows phenotypical traits in the small
population to vanish in finite time.  In \cite{Perthame09} this is
achieved by modifying \eqref{ctraitmut} as follows
\begin{equation}\label{ctraitmut2}
  \del_t n(x,t)  = \Big( r(x)- I \star n \Big) n
           -\sqrt{\frac{n}{\bar{n}}}+M[n](t,x)\,.
\end{equation}
The new term enables the population to vanish for some traits when the
population density is too low in comparison with $\bar{n}$, which
disallows densities corresponding to fewer than one individual.

\par

As one wishes to see the evolution of traits generated by
mutations, one needs to rescale the above equation in time.  This leads
to large deviation type phenomena which can be observed by defining
$n_{\eps}(t,x)=\exp(\phi_\eps(t,x)/\eps)$, with $\eps$ the ratio of the
reproduction and mutation time scales (see \cite{Perthame09,
  ODieckmann05}).
One now has two scales for the populations, the small population
threshold $\bar{n}$ and the exponential scale $\exp \phi/\eps$.

Often, the aim is to analyze the population behavior in the limit as
$\eps \to 0$ and therefore $\bar{n}$ should be chosen in terms of
$\eps$.  Numerical simulations for the corresponding equation with
initial data of monomorphic type, see \cite{Perthame09}, indicate that
the evolution speeds and time of branching depend on this choice of
$\bar{n}$ in terms of $\eps$. When $\eps$ is fixed, too large a value
of $\bar{n}$ leads to extinction, while too small a value of $\bar{n}$
leads to spontaneous jumps in branching, see \cite{Perthame09,
  ODieckmann05}.

\par

A complete mathematical analysis of the general equations is currently
intractable.  One of the few situations that is currently understood
\cite{Perthame09} is when the mortality threshold is chosen as
$\bar{n}_{\eps}=\exp({-\frac{\bar{\varphi}}{\eps}})$.
However, the scaling
$\bar{n}_{\eps}=\exp(-\frac{\bar{\varphi}}{\eps})$ for a fixed
$\bar\varphi$ is often much too small.  Recall that the threshold
$\bar{n}_{\eps}$ should correspond to a single individual in
stochastic modeling.  Thus, if we come back to the starting point,
which means a total population $\int n(x,t) \, dx$ of
$10^{10}-{10}^{12}$, then for $\eps=10^{-4}$ (a typical value for many
applications) and threshold $\bar{n}_{\eps}$ of order
$\exp(-\frac{1}{\eps})$, an aggregate population over any fixed
interval of traits would still represent much less than one
individual.

\par

Another type of correction has been proposed by Jabin \cite{Jabin12}.
The author allows the threshold $\bar{n}_{\eps}$ to be polynomial in
$\eps$ and introduces special cooperative term  $D_{\eps}$ in the (rescaled) model
\begin{equation}
\label{PEmodel}
  \del_t n_{\eps}(x,t) = \Big( r(x)-\int I(x-y)n(t,y) - D_{\eps}[n] \Big)n(x,t) +M[n](t,x).
\end{equation}
This term does not handle the small populations as precisely, but the new
model still corrects all the abnormal behaviors of \eqref{ctraitmut}
near the limit.
The cooperative effects in \cite{Jabin12} were, however, more intuited
than derived.  For example, the typical cooperative term $D_{\eps}$
has the form
\begin{equation}\label{coopt}
  -D_{\eps}[n_{\eps}](x,t) = -D_0 + \max\big(0,\,D_0-K(x) \SP d(x,
      \{n_{\eps} \geq \bar{n}_{\eps})\big)
\end{equation}
where $K(x)$ is a symmetric positive kernel.  In that respect, the
present work puts the approach in \cite{Jabin12} on a more solid
framework by actually deriving those effects from realistic
biochemical processes.


\par

The present work aims at introducing cooperative terms, similar to
those of \cite{Jabin12}, that arise naturally (directly from biological
processes), rather than {\it ad hoc} mathematical terms. The
cooperative effects in the integral operator $B[n]$ in \eqref{ctrait}
appear naturally in the process of model construction and give a hint
of what such terms should look like.

\par\medskip

{\bf Acknowledgments.} P.E. Jabin was partially supported by NSF Grant 1312142, NSF Grant 1614537, and by NSF Grant RNMS (Ki-Net) 1107444.


\begin{thebibliography}{99}

\setlength{\parskip}{1em}

\bibitem{Barles07}
G. Barles and  B. Perthame, Concentrations and constrained Hamilton-Jacobi equations arising in adaptive dynamics. {\em Contemp. Math.} (2007), 439, 57-68, Amer. Math. Soc., Providence, RI.

\bibitem{Burger96}
R. Burger and I.M. Bomze, Stationary distributions under mutation-selection balance: structure and properties. {\em Adv. Appl. Prob.} (1996), 28, 227-251.


\bibitem{Calcina04}
\'{A}. Calcina and S. Cuadrado, Small mutation rate and evolutionarily stable
strategies in infinite dimensional adaptive dynamics. {\it J. Math. Biol.} (2004), 48, 135–159.

\bibitem{Champagnat01}
N. Champagnat, R. Ferri\'{e}re and G. Ben Arous. The canonical equation of adaptive dynamics: a mathematical view. {\em Selection} (2001), 2, 71-81.

\bibitem{Champagnat06-1}
N. Champagnat, A microscopic interpretation for adaptive dynamics trait substitution sequence models. {\it Stoch. Process. Appl.} (2006), 116, 1127-60.

\bibitem{Champagnat06-2}
N. Champagnat, R. Ferri\'{e}re, S. M\'{e}l\'{e}ard, Unifying evolutionary dynamics: from individual stochastic processes to macroscopic models. {\it Theor. Popul. Biol.} (2006), 69, 297-321.


\bibitem{Champagnat08}
N. Champagnat, R. Ferri\'{e}re, S. M\'{e}l\'{e}ard. From individual stochastic processes to macroscopic models in adaptive evolution, {\em Stoch. Models} (2008), 24, suppl. 1, 2-44.


\bibitem{Champagnat10}
N. Champagnat, P.-E. Jabin and G. Raoul. Convergence to equilibrium in competitive Lotka-Volterra and chemostat systems, {\em C. R. Math. Acad. Sci. Paris} (2010), 348, 1267-72.


\bibitem{Desvillettes08}
L. Desvillettes, P.-E. Jabin, S. Mischler and G. Raoul. On selection dynamics for continuous structured populations. {\em Commun. Math. Sci.} (2008), 6(3), 729-747.


\bibitem{Dieckmann96}
U. Dieckmann and R. Law, The dynamical theory of coevolution: a derivation from stochastic ecological processes. {\em J. Math. Biol.} (1996), 34, 579-612.


\bibitem{ODieckmann01}
O. Diekmann, M. Gyllenberg, H. Huang, M. Kirkilionis, J.A.J. Metz and H.R. Thieme. On the formulation and analysis of general deterministic structured population models. II. Nonlinear theory. {\em J. Math. Biol.} (2001), 43, 157-189.

\bibitem{ODieckmann04}
O. Diekmann, A beginner's guide to adaptive dynamics. {\em Mathematical modelling of population dynamics, Banach Center Publ.} (2004), 63, 47-86, Polish Acad. Sci., Warsaw.

\bibitem{ODieckmann05}
O. Diekmann, P.E. Jabin, S. Mischler and B. Perthame, The dynamics of adaptation: An illuminating example and a Hamilton-Jacobi approach. {\em Theor. Popul. Biol.} (2005) 67, 257-271.


\bibitem{Ferriere02}
R. Ferri\`{e}re, J.L. Bronstein, S. Rinaldi, R. Law and M. Gauduchon (2002). Cheating and the evolutionary stability of mutualisms. {\em Proceedings of the Royal Society of London B} (2002), 269:773-780.

\bibitem{Galliard03}
J.F. Le Galliard, R. Ferri\`ere and U. Dieckmann, The adaptive dynamics of altruism in spatially heterogeneous populations. {\em Evolution} (2003) 57: 1-17.


\bibitem{Ghosal03}
S. Ghosal and S. Mandre, A simple model illustrating the role of turbulent
life on phytoplankton blooms. {\it J. Math. Biol.} (2003), 46, 4, 333–346.

\bibitem{Gopalsamy84}
K. Gopalsamy, Global asymptotic stability in Volterra's population systems. {\em J. Math. Biology} (1984), 19, 157-168.


\bibitem{Gyllenberg}
M. Gyllenberg and G. Mesz\'{e}na, On the impossibility of coexistence of infinitely many strategies, {J. Math. Biol.} (2005), 50 133-160.

\bibitem{Hirsch88}
M.W. Hirsch, Systems of differential equations which are competitive or cooperative. III. Competing species. Nonlinearity 1(1), 51-71 (1988).

\bibitem{Hofbauer98}
J. Hofbauer and K. Sigmund, {\em Evolutionary Games and Population Dynamics}, Cambridge University Press, Cambridge (1998).

\bibitem{Jabin11}
P.E. Jabin and G. Raoul, Selection dynamics with competition. {\em J. Math. Biol.} (2011), 63, issue 3, 493-517.

\bibitem{Jabin12}
P.E. Jabin, Small populations corrections for selection-mutation models, {\em Netw. Heterog. Media} (2012), 7, issue 4, 805-836.


\bibitem{Jones09}
E.I. Jones, R. Ferri\'{e}re, J.L. Bronstein, Eco-evolutionary dynamics of mutualists and exploiters. {\em The American Naturalist} (2009), 174: 780-794.

\bibitem{Jones12}
E.I. Jones, J.L. Bronstein and R. Ferri\`{e}re, The fundamental role of competition in the ecology and evolution of mutualisms. \emph{Ann NY Acad Sci.} (2012),  1256:66-88.

\bibitem{Perthame07}
{B. Perthame}, {\em Transport Equations in Biology}, Birkhouser Verlag (2007).

\bibitem{Perthame09}
B. Perthame and M. Gauduchon, Survival thresholds and mortality rates in adaptive dynamics: conciliating deterministic and stochastic simulations. {\em IMA Journal of Mathematical Medicine and Biology} (2009).

\bibitem{Smith95}
H.Smith and P. Waltman. {\em The Theory of the Chemostat. Dynamics of Microbial Competition}. Cambridge University Press (1995).

\bibitem{Holland10}
J.N. Holland, D.L. DeAngelis. A consumer-resource approach to the density-dependent population dynamics of mutualism. {\em Ecology} (2010), 91(5):1286-95.

\bibitem{Kiers10}
E.T. Kiers, T.M. Palmer, A.R. Ives, J.F. Bruno, and J.L. Bronstein,  Mutualisms in a changing world: an evolutionary perspective. {\em Ecology Letters} (2010), 13:1459-1474.

\bibitem{Krisztina08}
K. Krisztina and S. Kov\'{a}cs Qualitative behavior of n-dimensional ratio-dependent predator-prey systems. {\em Appl. Math. Comput.} (2008), 199(2), 535-546.

\bibitem{LESH95}
S.B. Leshine, Cellulose degradation in anaerobic environments. {\em Annu. Rev. Microb.} (1995), 49:399-426.

\bibitem{LYND02}
L. R. Lynd, P. J. Weimer,  W. H. van Zyl, I.S. Pretorius,
Microbial Cellulose Utilization: Fundamentals and Biotechnology, {\em Microbiology and molecular biology reviews.} (2002), 66-3, 506-577.

\bibitem{LWL95}
G.F. Lawler, {\em Introduction to Stochastic Processes.} Chapman \& Hall/CRC (1995).


\bibitem{Meleard09}
S. M\'{e}l\'{e}ard, Introduction to stochastic models for evolution. {\em Markov Process and Related Fields} (2009), 15,  issue 3, 259-264.

\bibitem{Metz92}
J.A.J. Metz, R.M. Nisbet, and S. A. H. Geritz, How should we define 'fitness' for general ecological scenarios? {\em Trends in Ecology and Evolution} (1992), 7, 198-202.


\bibitem{Metz96}
J.A.J. Metz, S.A.H. Geritz, G. Meszena, F.J.A. Jacobs and J.S. van Heerwaarden, Adaptive dynamics, a geometrical study of the consequences of nearly faithful reproduction. In: {\it Stochastic and spatial structures of dynamical systems} (Amsterdam, 1995), 183–231, Konink. Nederl. Akad. Wetensch. Verh. Afd. Natuurk. Eerste Reeks, 45, North-Holland, Amsterdam (1996).

\bibitem{Novick50}
Novick A. and L. Szilard, Experiments with the Chemostat on Spontaneous Mutations of Bacteria, {\em PNAS} (1950) 36, 708-719.

\bibitem{Platt09}
T.G. Platt and J.D. Bever. Kin competition and the evolution of cooperation. {\em Trends Ecol Evol.} (2009), 24(7):370-7.

\bibitem{Raoul11}
G. Raoul, Long time evolution of populations under selection and rare mutations, {\em Acta Applicandae Mathematica} (2011), 114, issue 1-2, 1-14.

\bibitem{TLKAR98}
H. M. Taylor, S. Karlin, {\em An introduction to Stochastic Modeling.} Academic Press (1998).

\bibitem{Zeeman93}
M.L. Zeeman, Hopf bifurcations in competitive three-dimensional Lotka-Volterra systems. {\em Dyn. Stab. Syst.} (1993), 8(3), 189–217.


\end{thebibliography}
\end{document}